\theoremstyle{plain}
\newtheorem{theorem}{Теорема}
\newtheorem{lemma}{Лема}
\theoremstyle{definition}
\newtheorem{definition}{Означення}
\theoremstyle{remark}
\begin{document}

УДК 517.5+517.17+517.18+511.72
\title[Функції зі складною локальною будовою] { Неперервні функції зі складною локальною будовою, визначені в термінах зображення чисел  знакопочережними рядами Кантора}

\author[С. О. Сербенюк]{С. О. Сербенюк}
\address{Інститут математики НАН України \\
         Київ, Україна}
\email{simon6@ukr.net; ~ simon.mathscience@imath.kiev.ua}

\begin{abstract}
Робота  присвячена одному нескінченнопараметричному класу неперервних функцій зі складною локальною будовою, визначених в термінах зображення чисел  знакопочережними рядами Кантора. Основна увага приділяється  диференціальним, інтегральним та іншим властивостям функцій. Знайдено умови монотонності та ніде немонотонності, вказано систему функціональних рівнянь, розв'язком якої є функція із заданого класу.

\textsc{Abstract.} The article is devoted to one infinite parametric class of continuous functions with complicated local structure such that  these functions are defined in terms of alternating Cantor series representation of numbers. The main attention is given to differential, integral and other properties of these functions. Conditions of monotony and nonmonotony  are discovered. The functional equations system, that the function from the given class of functions is a solution of the system is  indicated.
\end{abstract}

\maketitle
\begin{center}

{\itshape Ключові слова:} {знакопочережний ряд Катора,  система функціональних рівнянь, монотонна функція, неперервна ніде немонотонна функція, сингулярна функцiя, ніде недиференційовна функція, функція розподілу.}

{\itshape Keywords:} {alternating Cantor series,  functional equations system, monotonic function, continuous nowhere monotonic function, singular function, nowhere differentiable function, distribution function.}

\end{center}

\maketitle

\section{Вступ}
В сучасних наукових дослідженнях для задання функцій зі складною локальною будовою (сингулярних, неперервних ніде недиференційовних) широко використовуються різні представлення дійсних чисел. Наприклад, представлення чисел знакододатними та знакопочережними рядами, члени яких є числами, оберненими до натуральних або добутків натуральних чисел.

У даній статті розглядається застосування до побудови монотонних сингулярних та неперервних ніде немонотонних функцій розкладів дійсних чисел в нескінченні ряди, члени яких є раціональними числами. Традиційно найпростішими прикладами таких рядів вважають $s$-ве та нега-$s$-ве зображення.

В \cite{Cantor1} Георг Кантор вперше обгрунтував можливість представлення будь-якого дійсного числа $x \in[0;1]$ у вигляді розкладу в знакододатний ряд 
$$
\sum^{\infty} _{n=1} {\frac{\varepsilon_n}{d_1d_2...d_n}},
$$
де $(d_n)$ --- фіксована послідовність натуральних чисел $d_n$, більших $1$, $(A_{d_n})$ --- послідовність алфавітів $A_{d_n} \equiv \{0,1,...,d_n-1\}$, $\varepsilon_n\in A_{d_n}$.

Останній ряд в науковій літературі називають знакододатним рядом Кантора (або рядом Кантора). Очевидно, представлення дійсних чисел знакододатними рядами Кантора є узагальненням $s$-ої системи числення. Тому логічно було б припустити можливість зображення дійсних чисел у вигляді розкладу в \emph{знакопочережний ряд Кантора}
\begin{equation}
\label{eq: alternating Cantor series 1}
\sum^{\infty} _{n=1} {\frac{(-1)^n\varepsilon_n}{d_1d_2...d_n}},
\end{equation}
 який є узагальненням нега-$s$-го розкладу дійсного числа.

У даній роботі   зосереджено увагу на дослідженні основних властивостей функцій зі складною локальною будовою, аргумент яких представлений знакопочережним рядом Кантора виду
\begin{equation}
\label{eq: alternating Cantor series 2}
\sum^{\infty} _{n=1} {\frac{1+\varepsilon_n}{d_1d_2...d_n}(-1)^{n+1}},~\mbox{де}~\varepsilon_n\in A_{d_n},
\end{equation}
оскільки областю визначення досліджуваних функцій є відрізок $[0;1]$,  довільне число з якого можна представити саме у вигляді розкладу в останній ряд.

Перш ніж перейти до висвітлення основних результатів дослідження, розглянемо питаня про приналежність розкладу \eqref{eq: alternating Cantor series 1} і, як наслідок, розкладу \eqref{eq: alternating Cantor series 2} до систем числення.

\section{ Теорема про розклад числа в знакопочережний ряд Кантора}

\begin{theorem}
Кожне число $x \in [a_0-1;a_0]$, де
$$
a_0=\sum^{\infty} _{n=1} {\frac{(-1)^{n+1}}{d_1d_2...d_n}},
$$
 можна представити у вигляді розкладу в знакопочережний  ряд Кантора \eqref{eq: alternating Cantor series 1} не більш ніж двома способами.
\end{theorem}

Аналогічне твердження для $x \in [0;1]$ справедливе і для знакопочережних рядів~\eqref{eq: alternating Cantor series 2}, оскільки послідовність $(d_n)$ є фіксованою, звідки слідує, що   $a_0=const$.

\begin{definition}
Представлення довільного числа $x$ з відрізка $[a_0-1;a_0]$ (або з~$[0;1]$) у вигляді розкладу в знакопочережний ряд Кантора \eqref{eq: alternating Cantor series 1} (або \eqref{eq: alternating Cantor series 2})
називається {\it нега-$D$-представленням}, де $D\equiv(d_n)$,  ({\it або нега-$(d_n)$-представленням }) {\it числа $x$} і позначається $x=\Delta^{-D} _{\varepsilon_1\varepsilon_2...\varepsilon_n...}$ (або $x=\Delta^{-(d_n)} _{\varepsilon_1\varepsilon_2...\varepsilon_n...}$). Останні позначення називаються {\it нега-$D$-зображенням} ( або {\it нега-$(d_n)$-зображенням }) числа $x$ відповідно. 
\end{definition}

Твердження останньої теореми слідує з тверджень наступних двох лем.

\begin{lemma} 
Для будь-якого $x \in [a_0-1;a_0]$  існує послідовність $(\varepsilon_n)$ така, що число $x$ можна представити у вигляді розкладу в знакопочережний ряд Кантора \eqref{eq: alternating Cantor series 1}.
\end{lemma}
\begin{proof} Очевидно, що
$$
a_0=\max\left\{{\sum^{\infty} _{n=1} {\frac{(-1)^n\varepsilon_n}{d_1d_2...d_n}}}\right\} \equiv \Delta^{-D} _{0[d_2-1]0[d_4-1]0[d_6-1]0...},
$$
$$
a_0-1=\min\left\{{\sum^{\infty} _{n=1} {\frac{(-1)^n\varepsilon_n}{d_1d_2...d_n}}}\right\} \equiv\Delta^{-D} _{[d_1-1]0[d_3-1]0[d_5-1]0...}.
$$

 Нехай $x$ --- довільне число з $(a_0-1;a_0)$. Оскільки
$$
-\frac{\varepsilon_1}{d_1}-\sum^{\infty} _{k=2} {\frac{d_{2k-1}-1}{d_1d_2...d_{2k-1}}}< x \le -\frac{\varepsilon_1}{d_1}+\sum^{\infty} _{k=1} {\frac{d_{2k}-1}{d_1d_2...d_{2k}}}
$$
при $0 \le \varepsilon_1 \le d_1-1$, а також
$$
[a_0-1;a_0]=I_0=\bigcup^{d_1-1} _{i=0} {\left[-\frac{i}{d_1}- \sum^{\infty} _{k=2} {\frac{d_{2k-1}-1}{d_1d_2...d_{2k-1}}}; -\frac{i}{d_1}+\sum^{\infty} _{k=1} {\frac{d_{2k}-1}{d_1d_2...d_{2k}}} \right]},
$$
тому
$$
- \sum^{\infty} _{k=2} {\frac{d_{2k-1}-1}{d_1d_2...d_{2k-1}}}<x+\frac{\varepsilon_1}{d_1}\le \sum^{\infty} _{k=1} {\frac{d_{2k}-1}{d_1d_2...d_{2k}}}.
$$

Позначимо $x+\frac{\varepsilon_1}{d_1}=x_1$. Отримаємо випадки:

\begin{enumerate}
\item 
$$
x_1= \sum^{\infty} _{k=1} {\frac{d_{2k}-1}{d_1d_2...d_{2k}}}.
$$

В такому разі отримаємо, що 
$$
x=\Delta^{-D} _{\varepsilon_1[d_2-1]0[d_4-1]0...} ~\mbox{або} ~ x=\Delta^{-D} _{[\varepsilon_1-1]0[d_3-1]0[d_5-1]0...}.
$$
\item Якщо не справджується рівність, зазначена у першому випадку, тоді 
$$
x=-\frac{\varepsilon_1}{d_1}+x_1, ~\mbox{де}
$$
$$
\frac{\varepsilon_2}{d_1d_2}-\sum^{\infty} _{k=2} {\frac{d_{2k-1}-1}{d_1d_2...d_{2k-1}}}\le x_1<\frac{\varepsilon_2}{d_1d_2}+\sum^{\infty} _{k=2} {\frac{d_{2k}-1}{d_1d_2...d_{2k}}}.
$$
\end{enumerate}
Позначивши  $x_2=x_1-\frac{\varepsilon_2}{d_1d_2}$, отримаємо знову ж таки два випадки:
\begin{enumerate}
\item 
$$
x_2=\sum^{\infty} _{k=2} {\frac{d_{2k-1}-1}{d_1d_2...d_{2k-1}}}.
$$
У такому разі
$$
x=\Delta^{-D} _{\varepsilon_1\varepsilon_2[d_3-1]0[d_5-1]0...} ~\mbox{або} ~ x=\Delta^{-D} _{\varepsilon_1[\varepsilon_2-1]0[d_4-1]0[d_6-1]0...}.
$$
\item Якщо  умова першого випадку знову ж таки не виконується, отримаємо
$$
x=-\frac{\varepsilon_1}{d_1}+\frac{\varepsilon_2}{d_1d_2}+x_2, ~\mbox{де}
$$
$$
-\frac{\varepsilon_3}{d_1d_2d_3}-\sum^{\infty} _{k=3} {\frac{d_{2k-1}-1}{d_1d_2...d_{2k-1}}}<x_2 \le -\frac{\varepsilon_3}{d_1d_2d_3}+\sum^{\infty} _{k=2} {\frac{d_{2k}-1}{d_1d_2...d_{2k}}} ~\mbox{і т. д.}
$$
\end{enumerate}

За скінченну кількість кроків $m$ отримаємо подвійну строгу нерівність
$$
\frac{(-1)^{m+1}\varepsilon_{m+1}}{d_1d_2...d_{m+1}}-\sum_{k>\frac{m+2}{2}} {\frac{d_{2k-1}-1}{d_1d_2...d_{2k-1}}}<x_m<\frac{(-1)^{m+1}\varepsilon_{m+1}}{d_1d_2...d_{m+1}}+\sum_{k>\frac{m+1}{2}} {\frac{d_{2k}-1}{d_1d_2...d_{2k}}},
$$
але в залежності від парності $m$ одна з нерівностей за певних умов може перетворюватись в рівність. Тобто,  можливі випадки:
\begin{enumerate}
\item 
$$
x_{m+1}=\begin{cases}
\sum_{k>\frac{m+2}{2}} {\frac{d_{2k-1}-1}{d_1d_2...d_{2k-1}}},&\text{якщо $m$ --- непарне;}\\
\\
\sum_{k>\frac{m+1}{2}} {\frac{d_{2k}-1}{d_1d_2...d_{2k}}},&\text{якщо $m$ --- парне.}
\end{cases}
$$
В такому разі 
$$
x=\Delta^{-D} _{\varepsilon_1\varepsilon_2...\varepsilon_{m+1}[d_{m+2}-1]0[d_{m+4}-1]0...} ~\mbox{або} ~ x=\Delta^{-D} _{\varepsilon_1...\varepsilon_{m}[\varepsilon_{m+1}-1]0[d_{m+3}-1]0[d_{m+5}-1]0...}.
$$
\item У випадку, коли не існує такого $m \in \mathbb N$, щоб виконувалась хоча б одна з умов останньої системи, отримаємо
$$
x=\sum^{m+1} _{n=1}{\frac{(-1)^n\varepsilon_n}{d_1d_2...d_n}}+x_{m+1}.
$$
 Продовжуючи процес до нескінченності, отримаємо
$$
x=-\frac{\varepsilon_1}{d_1}+x_1=-\frac{\varepsilon_1}{d_1}+\frac{\varepsilon_2}{d_1d_2}+x_2=...=
$$
$$
=-\frac{\varepsilon_1}{d_1}+\frac{\varepsilon_2}{d_1d_2}-\frac{\varepsilon_3}{d_1d_2d_3}+...+\frac{(-1)^n\varepsilon_n}{d_1d_2...d_n}+x_n=....=\sum^{\infty} _{n=1} {\frac{(-1)^n \varepsilon_n}{d_1d_2...d_n}}.
$$
\end{enumerate}
\end{proof}

\begin{lemma} 
Числа $x=\Delta^{-D} _{\varepsilon_1\varepsilon_2...\varepsilon_{m-1}\varepsilon_m \varepsilon_{m+1}...}$ та $x^{'}=~\Delta^{-D} _{\varepsilon_1\varepsilon_2...\varepsilon_{m-1}\varepsilon^{'} _m\varepsilon^{'} _{m+1}...}$, де  $\varepsilon_m \ne \varepsilon^{'} _m$, співпадають тоді і тільки тоді, коли  справедливою є одна із систем:
$$
\left\{
\begin{array}{rcl}
\varepsilon_{m+2i-1}&=&d_{m+2i-1}-1,\\
\varepsilon_{m+2i}&= 0 &=\varepsilon^{'} _{m+2i-1},\\
\varepsilon^{'} _{m+2i}&=&d_{m+2i}-1,\\
\varepsilon^{'} _m & = &\varepsilon_m-1;\\
\end{array}
\right.
\mbox{або}
\left\{
\begin{array}{rcl}
\varepsilon_{m+2i}&=&d_{m+2i}-1,\\
\varepsilon_{m+2i-1}&= 0 &=\varepsilon^{'} _{m+2i},\\
\varepsilon^{'} _{m+2i-1}&=&d_{m+2i-1}-1,\\
\varepsilon^{'} _m -1& = &\varepsilon_m;\\
\end{array}
\right.
$$
для всіх $i \in \mathbb N$.
\end{lemma}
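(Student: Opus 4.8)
The plan is to derive both implications from the single elementary identity
$$\sum_{n=N+1}^{\infty}\frac{d_n-1}{d_1d_2\cdots d_n}=\frac{1}{d_1d_2\cdots d_N},$$
which follows at once from $\dfrac{d_n-1}{d_1\cdots d_n}=\dfrac{1}{d_1\cdots d_{n-1}}-\dfrac{1}{d_1\cdots d_n}$ together with $d_n\ge 2$, and which is exactly the computation already used for the endpoint values $x_m$ in the proof of the preceding lemma. Throughout I would put $\delta_n=\varepsilon_n-\varepsilon'_n$; since the first $m-1$ digits agree, $x=x'$ is equivalent to $\sum_{n\ge m}\dfrac{(-1)^n\delta_n}{d_1\cdots d_n}=0$, and one always has $|\delta_n|\le d_n-1$ with $\delta_m\ne0$.

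For the direction "$\Leftarrow$" I would simply substitute. In the first system $\delta_m=1$, $\delta_{m+2i-1}=d_{m+2i-1}-1$ and $\delta_{m+2i}=-(d_{m+2i}-1)$; using $(-1)^{m+2i-1}=-(-1)^m$ and $(-1)^{m+2i}=(-1)^m$, the sum $\sum_{n\ge m}\dfrac{(-1)^n\delta_n}{d_1\cdots d_n}$ collapses to $(-1)^m\!\left(\dfrac{1}{d_1\cdots d_m}-\sum_{n>m}\dfrac{d_n-1}{d_1\cdots d_n}\right)$, which vanishes by the identity above. The second system is the same computation with the overall sign reversed — indeed it is the first system with the roles of $x$ and $x'$ interchanged.

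For the direction "$\Rightarrow$", assume $x=x'$. Then
$$\frac{|\delta_m|}{d_1\cdots d_m}=\Bigl|\sum_{n>m}\frac{(-1)^n\delta_n}{d_1\cdots d_n}\Bigr|\le\sum_{n>m}\frac{|\delta_n|}{d_1\cdots d_n}\le\sum_{n>m}\frac{d_n-1}{d_1\cdots d_n}=\frac{1}{d_1\cdots d_m},$$
so $|\delta_m|=1$ and, consequently, both inequalities above are in fact equalities. Equality in the second one, combined with $|\delta_n|\le d_n-1$ termwise, forces $|\delta_n|=d_n-1$ for every $n>m$; hence $\{\varepsilon_n,\varepsilon'_n\}=\{0,d_n-1\}$ and no term vanishes. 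Equality in the triangle inequality then forces all the numbers $(-1)^n\delta_n$ ($n>m$) to have one common sign, and, since the two sides of $\dfrac{(-1)^m\delta_m}{d_1\cdots d_m}+\sum_{n>m}\dfrac{(-1)^n\delta_n}{d_1\cdots d_n}=0$ must cancel, that common sign equals the sign of $-(-1)^m\delta_m$. Unwinding the signs gives $\delta_{m+j}=(-1)^{j+1}\,\mathrm{sign}(\delta_m)\,(d_{m+j}-1)$ for all $j\ge1$, which, together with $\delta_m=\pm1$, reads off as precisely the first system when $\delta_m=1$ and the second when $\delta_m=-1$.

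The step I expect to be the main obstacle is the sign–parity bookkeeping at the end: one must check that the alternating factor $(-1)^n$ inside the series and the factor $(-1)^m$ coming from the first nonzero digit difference combine so that the two systems of the statement are governed only by the sign of $\varepsilon_m-\varepsilon'_m$ and not by the parity of $m$; in particular the two subcases $\delta_m=+1$ and $\delta_m=-1$ must be matched to the two displayed systems without a slip. The remaining ingredients — absolute convergence of both series (legitimizing the term-by-term manipulations) and the fact that equality in the triangle inequality for a convergent real series with no zero terms forces constancy of sign — are routine.
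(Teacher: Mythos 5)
Your proof is correct and follows essentially the same route as the paper's: both rest on the telescoping identity $\sum_{n>N}\frac{d_n-1}{d_1\cdots d_n}=\frac{1}{d_1\cdots d_N}$ and on analysing when the tail sum attains its extremal value (the paper factors out $\frac{(-1)^m}{d_1\cdots d_m}$, bounds the remaining series $v\ge -1$ and characterises equality; you phrase the same extremality as the equality case of the triangle inequality). If anything, your version is slightly more complete, since it explicitly rules out $|\varepsilon_m-\varepsilon'_m|\ge 2$ and writes out the sufficiency computation, which the paper dismisses as obvious.
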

\begin{proof} {\itshape Необхідність.} Нехай $\varepsilon_m= \varepsilon^{'} _m +1 $. Тоді
$$
0=x-x^{'}=\Delta^{-D} _{\varepsilon_1\varepsilon_2...\varepsilon_{m-1}\varepsilon_m \varepsilon_{m+1}...}-\Delta^{-D} _{\varepsilon_1\varepsilon_2...\varepsilon_{m-1}\varepsilon^{'} _m\varepsilon^{'} _{m+1}...}=
$$
$$
=\frac{(-1)^m}{d_1d_2...d_m}+\frac{(-1)^{m+1}(\varepsilon_{m+1}-\varepsilon^{'} _{m+1})}{d_1d_2...d_{m+1}}+...+
$$
$$
+\frac{\varepsilon_{m+i}-\varepsilon^{'} _{m+i}}{d_1d_2...d_{m+i}}(-1)^{m+i}+...=\frac{(-1)^m}{d_1d_2...d_m}\left(1+\sum^{\infty} _{i=1} {\frac{(-1)^i (\varepsilon_{m+i}-\varepsilon^{'} _{m+i})}{d_{m+1}d_{m+2}...d_{m+i}}}\right).
$$
$$
v \equiv \sum^{\infty} _{i=1} {\frac{(-1)^i(\varepsilon_{m+i}-\varepsilon^{'} _{m+i})}{d_{m+1}d_{m+2}...d_{m+i}}} \ge - \sum^{\infty} _{i=1}{\frac{d_{m+i}-1}{d_{m+1}d_{m+2}...d_{m+i}}}=-1.
$$
Остання нерівність перетворюється в рівність лише в тому випадку, коли 
$$
\varepsilon_{m+2i}=\varepsilon^{'} _{m+2i-1}=0~\mbox{ і} ~\varepsilon_{m+2i-1}=d_{m+2i-1}-1, ~\varepsilon^{'} _{m+2i}=d_{m+2i}-1.
$$

Тобто, в даному випадку з рівності $x=x^{'}$ випливають умови першої системи. Аналогічним чином, з умови $x=x^{'}$ при $\varepsilon^{'} _m=\varepsilon_m+1$  випливають умови другої системи.

{\it Достатність} є очевидною.\end{proof}

При подальшому дослідженні використовуватимуться допоміжні поняття  нега-$(d_n)$-раціонального та нега-$(d_n)$-ірраціонального чисел.

\begin{definition}
Числа, що мають  два різних нега-$(d_n)$-зображення, а саме:
$$
\Delta^{-(d_n)} _{\varepsilon_1\varepsilon_2...\varepsilon_{n-1}\varepsilon_n[d_{n+1}-1]0[d_{n+3}-1]...}=\Delta^{-(d_n)} _{\varepsilon_1\varepsilon_2...\varepsilon_{n-1}[\varepsilon_n-1]0[d_{n+2}-1]0[d_{n+4}-1]...},
$$
  називаються {\it нега-$(d_n)$-раціональними}. Решта чисел з $[0;1]$  називаються {\it нега-$(d_n)$-ірраціональними} і мають єдине нега-$(d_n)$-зображення.
\end{definition}

\section{ Об'єкт дослідження.}
\label{3}

Нехай 
$P=||p_{i,n}||$ --- задана матриця, така, що $n=1,2,...$, $i=\overline{0,d_n-1}$, і для якої справедливою є наступна система властивостей:
$$ 
\left\{
\begin{aligned}
\label{eq: tilde Q 1}
1^{\circ}.~~~~~~~~~~~~~~~~~\forall n \in \mathbb N:  p_{i,n}\in (-1;1);\\
2^{\circ}.  ~~~~~~~~~~~~~~~~~~~\forall n \in \mathbb N: \sum^{d_n-1}_{i=0} {p_{i,n}}=1;\\
3^{\circ}. ~~~~~~~~ \forall (i_n), i_n \in  A_{d_n}: \prod^{\infty} _{n=1} {|p_{i_n,n}|}=0;\\
4^{\circ}.~~~~~~~~~\forall  i_n \in A_{d_n}\setminus\{0\}: \sum^{i_n-1} _{i=0} {p_{i,n}}>0.\\
\end{aligned}
\right.
$$ 

Нехай $x=\Delta^{-(d_n)} _{\varepsilon_1\varepsilon_2...\varepsilon_n...}$. Розглянемо функцію виду
$$ 
\tilde{F}(x)=\beta_{\varepsilon_1(x),1}+\sum^{\infty} _{n=2} {\left(\tilde{\beta}_{\varepsilon_n(x),n}\prod^{n-1} _{j=1} {\tilde{p}_{\varepsilon_j(x),j}}\right)},
$$
де
$$
\beta_{\varepsilon_{n},n}=\begin{cases}
0,&\text{якщо $\varepsilon_{n}=0$;}\\
\sum^{\varepsilon_{n}-1} _{i=0} {p_{i,n}}>0,&\text{якщо $\varepsilon_{n}\ne 0$,}
\end{cases}
$$
$$
\tilde{\beta}_{\varepsilon_n(x),n}=\begin{cases}
\beta_{\varepsilon_n(x),n},&\text{якщо $n$ --- непарне;}\\
\beta_{d_n-1-\varepsilon_n(x),n},&\text{якщо $n$ --- парне,}
\end{cases}
$$
$$
\tilde{p}_{\varepsilon_n(x),n}=\begin{cases}
p_{\varepsilon_n(x),n},&\text{якщо $n$ --- непарне;}\\
p_{d_n-1-\varepsilon_n(x),n},&\text{якщо $n$ --- парне.}
\end{cases}
$$

Дослідимо наявність інших способів для  задання досліджуваних функцій.
Для цього скористаємося взаємозв'язком знакопочережного та знакододатного канторівських представлень:
$$
\sum^{\infty} _{n=1} {\frac{1+\varepsilon_n}{d_1d_2...d_n}(-1)^{n+1}}\equiv \Delta^{-(d_n)} _{\varepsilon_1\varepsilon_2...\varepsilon_n...} \equiv \Delta^{D} _{\varepsilon_1[d_2-1-\varepsilon_2]\varepsilon_3[d_4-1-\varepsilon_4]...} \equiv \frac{\varepsilon_1}{d_1}+\frac{d_2-1-\varepsilon_2}{d_1d_2}+....
$$

З останнього випливає, що
$$
\tilde{F}(x)=F(g(x))=F\circ g,
$$
де
$$
x=\Delta^{-(d_n)} _{\varepsilon_1\varepsilon_2...\varepsilon_n...} \stackrel{g}{\rightarrow} \Delta^{D} _{\varepsilon_1[d_2-1-\varepsilon_2]...\varepsilon_{2n-1}[d_{2n}-1-\varepsilon_{2n}]...}=g(x)=y,
$$
$$
F\left(\sum^{\infty} _{n=1} {\frac{\varepsilon_n}{d_1d_2...d_n}} \right)=\beta_{\varepsilon_1,1}+\sum^{\infty} _{n=2} {\left(\beta_{\varepsilon_n,n}\prod^{n-1} _{j=1} {p_{\varepsilon_j,j}}\right)}.
$$

Корисним  при дослідженні різних способів задання функції $\tilde{F}$ є поняття оператора зсуву цифр представлення дійсного числа знакододатним рядом Кантора.

\begin{definition}
{\it Оператором зсуву цифр представлення числа $x=\Delta^D _{\varepsilon_1\varepsilon_2...\varepsilon_n...}$ знакододатним рядом Кантора} називається відображення $\hat{\varphi}$, задане наступним чином:
$$
\hat{\varphi}(x)=\hat{\varphi} \left (\sum^{\infty} _{n=1}{\frac{\varepsilon_n}{d_1d_2...d_n}}\right)=\sum^{\infty} _{n=2} {\frac{\varepsilon_n}{d_2d_3...d_n}}.
$$
\end{definition}

 Точніше, $\hat{\varphi}(x)=d_1x-\varepsilon_1(x)\equiv d_1\Delta^D _{0\varepsilon_2\varepsilon_3...}$. Варто відмітити, що
$$
\hat{\varphi}^k(x)=\sum^{\infty} _{n=k+1} {\frac{\varepsilon_n}{d_{k+1}d_{k+2}...d_n}}\equiv  d_1d_2...d_{k}\Delta^D _{\underbrace{0...0}_{k}\varepsilon_{k+1}\varepsilon_{k+2}\varepsilon_{k+3}...},
$$
\begin{equation}
\label{eq:rivnist1(2)}
\frac{i}{d_k}+\frac{\hat{\varphi}^k(x)}{d_k}=d_1d_2...d_{k-1}\Delta^D _{\underbrace{0...0}_{k-1}i\varepsilon_{k+1}\varepsilon_{k+2}\varepsilon_{k+3}...}=\hat{\varphi}^{k-1}(x).
\end{equation}

Легко показати, що досліджувана функція  $\tilde{F}$   в класі визначених та обмежених на відрізку функцій є єдиним розв'язком наступних (еквівалентних між собою в силу рівності \eqref{eq:rivnist1(2)}) нескінченних систем функціональних рівнянь. А саме:
\begin{itemize}
\item 
$$
f\left(\frac{\tilde i(x)+\hat{\varphi}^k (y)}{d_k}\right)=\tilde{\beta}_{i(x),k}+\tilde{p}_{i(x),k}\cdot f(\hat{\varphi}^k (y)), 
$$
де $k=1,2,...,$ $i \in A_{d_k}$,
$$
\tilde i(x)=\begin{cases}
i(x),&\text{якщо $k$ --- непарне;}\\
d_k-1-i(x),&\text{якщо $k$ --- парне;}
\end{cases}
$$
\item $$
f(\hat{\varphi}^k (y))=\tilde{\beta}_{\varepsilon_{k+1}(x),k+1}+\tilde{p}_{ \varepsilon_{k+1}(x),k+1}f(\hat{\varphi}^{k+1} (y)), 
$$
де \mbox{$k=0,1,2,...,$ $\varepsilon_{k+1} \in A_{d_{k+1}}$ i  $y=~\Delta^D _{\varepsilon_1[d_2-1-\varepsilon_2]\varepsilon_3...[d_{2n}-1-\varepsilon_{2n}]\varepsilon_{2n+1}...}$.}
\end{itemize}

Справді, 
$$
\tilde{F}(x)=\beta_{\varepsilon_1(x),1}+\sum^{k} _{n=2} {\left(\tilde{\beta}_{\varepsilon_n(x),n}\prod^{n-1} _{j=1} {\tilde{p}_{\varepsilon_j(x),j}}\right)}+\left(\prod^{k} _{j=1} {\tilde{p}_{\varepsilon_j(x),j}}\right)\cdot f(\hat{\varphi}^k(y)),
$$
де $y=\Delta^D _{\varepsilon_1(x)[d_2-1-\varepsilon_2(x)]\varepsilon_3(x)[d_4-1-\varepsilon_4(x)]...}$. Оскільки функція $\tilde{F}$ є визначеною та обмеженою на $[0;1]$, в силу третьої властивості матриці $P$ при граничному переході в останній рівності при $k \to \infty$ легко помітити справедливість доводжуваного твердження.

Самим важливим твердженням даного пункту статті є відповідь на питання про коректність означеної функції. Тобто, під коректністю функції розуміється  неіснування двох і більше різних значень функції для довільного аргументу з її області визначення. 

\begin{lemma} 
Функція $y=\tilde{F}(x)$ є коректно означеною в довільній точці відрізка $[0;1]$. 
\end{lemma}
\begin{proof} Для доведення твердження леми достатньо розглянути випадок, коли аргумент функції є  нега-$(d_n)$-раціональним числом. Нехай $x$ --- нега-$(d_n)$-раціональне число. 

Розглянемо  різницю 
$$
\delta=\tilde{F}(\Delta^{-(d_n)} _{\varepsilon_1...\varepsilon_{n-1}\varepsilon_n[d_{n+1}-1]0[d_{n+3}]0...})-\tilde{F}(\Delta^{-(d_n)} _{\varepsilon_1...\varepsilon_{n-1}[\varepsilon_n-1]0[d_{n+2}-1]0[d_{n+4}-1]0...})=
$$
$$
=\left(\prod^{n-1} _{j=1} {\tilde{p}_{\varepsilon_j,j}}\right)\cdot [(\tilde{\beta}_{\varepsilon_n,n}+\tilde{\beta}_{d_{n+1}-1,n+1}\tilde{p}_{\varepsilon_n,n}+\tilde{\beta}_{0,n+2}\tilde{p}_{\varepsilon_n,n}\tilde{p}_{d_{n+1}-1,n+1}+
$$
$$
+\tilde{\beta}_{d_{n+3}-1,n+3}\tilde{p}_{\varepsilon_n,n}\tilde{p}_{d_{n+1}-1,n+1}\tilde{p}_{0,n+2}+...)-(\tilde{\beta}_{\varepsilon_n-1,n}+\tilde{\beta}_{0,n+1}\tilde{p}_{\varepsilon_n-1,n}+
$$
$$
+\tilde{\beta}_{d_{n+2}-1,n+2}\tilde{p}_{\varepsilon_n-1,n}\tilde{p}_{0,n+1}+\tilde{\beta}_{0,n+3}\tilde{p}_{\varepsilon_n-1,n}\tilde{p}_{0,n+1}\tilde{p}_{d_{n+2}-1,n+2}+...)].
$$

Для  парного $n$ 
$$
\delta=\left(\prod^{n-1} _{j=1} {\tilde{p}_{\varepsilon_j,j}}\right)[\beta_{ d_n-1-\varepsilon_n,n}+\beta_{d_{n+1}-1,n+1}p_{d_n-1-\varepsilon_n,n}+
$$
$$
+p_{ d_n-1-\varepsilon_n,n}\sum^{\infty} _{k=2} {\left({\beta_{d_{n+k}-1,n+k}}\prod^{k-1} _{j=1} {p_{d_{n+j}-1,n+j}}\right)}]-\left(\prod^{n-1} _{j=1} {\tilde{p}_{\varepsilon_j,j}}\right)\times
$$
$$
\times\left[\beta_{d_n-\varepsilon_n,n}+\beta_{0,n+1}p_{ d_n-\varepsilon_n,n}+p_{d_n-\varepsilon_n,n}\sum^{\infty} _{k=2} {\left({\beta_{0,n+k}}\prod^{k-1} _{j=1} {p_{ 0,n+j}}\right)}\right]=
$$
$$
=\left(\prod^{n-1} _{j=1} {\tilde{p}_{\varepsilon_j,j}}\right)\cdot (-{p}_{d_n-\varepsilon_n-1,n}+(1-p_{d_{n+1}-1,n+1}){p}_{d_n-\varepsilon_n-1,n}+
$$
$$
+{p}_{d_n-\varepsilon_n-1,n}\sum^{\infty} _{k=2} {\left[{(1-p_{d_{n+k}-1,n+k})}\prod^{k-1} _{j=1} {p_{ d_{n+j}-1,n+j}}\right]})=0.
$$

У випадку непарного $n$ отримаємо
$$
\delta=({p}_{\varepsilon_n-1,n}-(1-{p}_{d_{n+1}-1,n+1}){p}_{\varepsilon_n-1,n}-(1-{p}_{d_{n+2}-1,n+2}){p}_{\varepsilon_n-1,n}{p}_{d_{n+1}-1,n+1}-...)\prod^{n-1} _{j=1} {\tilde{p}_{\varepsilon_j,j}}=0.
$$
\end{proof}

Перейдемо до розгляду властивостей функції $\tilde{F}(x)$.

\section{ Неперервність та монотонність}

\begin{theorem} Функція $\tilde{F}$ є : 
\begin{itemize}
\item неперервною;
\item монотонно неспадною за умови невід'ємності елементів матриці $P$, зокрема, строго зростаючою за умови,  коли всі елементи $p_{i,n}$ матриці  $P$ є додатними;
\end{itemize}
\end{theorem}
\begin{proof} {\itshape Неперервність.}
Нехай маємо довільне число $ x_0=\Delta^{-(d_n)} _{\varepsilon_1(x_0)\varepsilon_2(x_0)...\varepsilon_{n_0}(x_0)\varepsilon_{n_0+1}(x_0)...}$ з відрізка $[0;1]$. Нехай $x=\Delta^{-(d_n)} _{\varepsilon_1(x)\varepsilon_2(x)...\varepsilon_{n_0}(x)\varepsilon_{n_0+1}(x)...}$ --- таке число, для якого справедливими є умови, що $\varepsilon_j(x)=\varepsilon_j(x_0)$ при $j=\overline{1,n_0-1}$ та  $\varepsilon_{n_0}(x)\ne \varepsilon_{n_0}(x_0)$.  Розглянемо різницю 
$$
\tilde{F}(x)-\tilde{F}(x_0)=\left(\prod^{n_0-1} _{j=1} {\tilde{p}_{\varepsilon_j(x_0),j}}\right)\left(\tilde{F}(\hat{\varphi}^{n_0-1}(x))-\tilde{F}(\hat{\varphi}^{n_0-1}(x_0))\right).
$$
Таким чином,

$$
|\tilde{F}(x)-\tilde{F}(x_0)|\le \left(\prod^{n_0-1} _{j=1} {|\tilde{p}_{\varepsilon_j(x_0),j}}|\right)\le \left(\max_{j=\overline{1,n_0-1}} {|\tilde{p}_{\varepsilon_j(x_0),j}|} \right)^{n_0-1} \to 0 ~~~(n_0 \to \infty),
$$
що еквівалентно умові $\lim_{x \to x_0} {\tilde{F}(x)}=\tilde{F}(x_0)$.

Справді, для нега-$(d_n)$-ірраціонального числа $x_0$ умови $x \to x_0$ та  $n_0\to \infty$ є еквівалентними  і тому не виникає сумнівів щодо  неперервності функції $\tilde{F}$.

Нехай $x_0$ --- нега-$(d_n)$-раціональне число. У такому разі неперервність функції $\tilde{F}$ в нега-$(d_n)$-раціональній точці $x_0$ можна довести, використовуючи поняття односторонніх границь з врахуванням випадків парного та непарного $n_0$.

{\it{Монотонність.}}
Нехай елементи $p_{i,n}$ матриці $P$ є невід'ємними. Очевидно, що
$$
\tilde{F}(0)=\tilde{F}(\Delta^{-(d_n)} _{0[d_2-1]0[d_4-1]...})=\beta_{0,1}+\sum^{\infty} _{n=2} {\left({\beta}_{0,n}\prod^{n-1} _{j=1} {{p}_{0,j}}\right)}=\min_{x \in [0;1]} {\tilde{F}(x)}=0,
$$
$$
\tilde{F}(1)=\tilde{F}(\Delta^{-(d_n)} _{[d_1-1]0[d_3-1]0...})=\beta_{d_1-1,1}+\sum^{\infty} _{n=2} {\left({\beta}_{d_n-1,n}\prod^{n-1} _{j=1} {{p}_{d_j-1,j}}\right)}=
$$
$$
=\max_{x \in [0;1]} {\tilde{F}(x)}=1.
$$

 Нехай  $x_1=\Delta^{-(d_n)} _{\varepsilon_1(x_1)\varepsilon_2(x_1)...\varepsilon_{n}(x_1)...}$ та $x_2=~\Delta^{-(d_n)} _{\varepsilon_1(x_2)\varepsilon_2(x_2)...\varepsilon_{n}(x_2)...}$~--- деякі числа  ($x_1<~x_2$).  Очевидно, існує такий номер $n_0$, що $\varepsilon_j(x_1)=~\varepsilon_j(x_2)$ для всіх $j=~\overline{1,n_0-1}$ та $\varepsilon_{n_0}(x_1)<~\varepsilon_{n_0}(x_2)$ при непарному $n_0$ або $\varepsilon_{n_0}(x_1)>\varepsilon_{n_0}(x_2)$ у випадку парного $n_0$.

Отже,
$$
\tilde{F}(x_2)-\tilde{F}(x_1)=\left(\prod^{n_0-1} _{j=1} {\tilde{p}_{\varepsilon_{j}(x_2),j}}\right)\cdot(\tilde{\beta}_{\varepsilon_{n_0}(x_2),n_0}-\tilde{\beta}_{\varepsilon_{n_0}(x_1),n_0}+
$$
$$
+\sum^{\infty} _{m=1} {\left(\tilde{\beta}_{\varepsilon_{n_0+m}(x_2),n_0+m}\prod^{m-1} _{j=0} {\tilde{p}_{\varepsilon_{n_0+j}(x_2),n_0+j}}\right)}
-\sum^{\infty} _{m=1} {\left(\tilde{\beta}_{\varepsilon_{n_0+m}(x_1),n_0+m}\prod^{m-1} _{j=0} {\tilde{p}_{\varepsilon_{n_0+j}(x_1),n_0+j}}\right)}).
$$
Оскільки
$$
\kappa=\sum^{\infty} _{m=1} {\left(\tilde{\beta}_{\varepsilon_{n_0+m}(x_2),n_0+m}\prod^{m-1} _{j=0} {\tilde{p}_{\varepsilon_{n_0+j}(x_2),n_0+j}}\right)}
-\sum^{\infty} _{m=1} {\left(\tilde{\beta}_{\varepsilon_{n_0+m}(x_1),n_0+m}\prod^{m-1} _{j=0} {\tilde{p}_{\varepsilon_{n_0+j}(x_1),n_0+j}}\right)}\ge 
$$
$$
\ge -\sum^{\infty} _{m=1} {\left(\tilde{\beta}_{\varepsilon_{n_0+m}(x_1),n_0+m}\prod^{m-1} _{j=0} {\tilde{p}_{\varepsilon_{n_0+j}(x_1),n_0+j}}\right)},
$$
де для непарного $n_0$
$$
\kappa \ge -p_{\varepsilon_{n_0}(x_1),n_0}(1-p_{d_{n_0+1}-1,n_0+1}+ 
\sum^{\infty} _{m=2} {\left[(1-p_{d_{n_0+m}-1,n_0+m})\prod^{m-1} _{j=1} {p_{d_{n_0+j}-1,n_0+j}}\right]})=-p_{\varepsilon_{n_0}(x_1),n_0}
$$
та для парного  $n_0$ по аналогії отримаємо
$$
\kappa \ge  -p_{d_{n_0}-1-\varepsilon_{n_0}(x_1),n_0}\cdot \left (\max_{x \in [0,1]} {\tilde{F}(\hat{\varphi}^{n_0}(x_1))}\right) = -p_{d_{n_0}-1-\varepsilon_{n_0}(x_1),n_0}.
$$
Як наслідок, у випадку непарного $n_0$
$$
\tilde{F}(x_2)-\tilde{F}(x_1) =\left(\prod^{n_0-1} _{j=1} {\tilde{p}_{\varepsilon_{j}(x_2),j}}\right)\cdot (\tilde{\beta}_{\varepsilon_{n_0}(x_2),n_0}-\tilde{\beta}_{\varepsilon_{n_0}(x_1),n_0}+\kappa)\ge 
$$
$$
\ge \left(\prod^{n_0-1} _{j=1} {\tilde{p}_{\varepsilon_{j}(x_2),j}}\right)\cdot (p_{\varepsilon_{n_0}(x_1),n_0}+p_{\varepsilon_{n_0}(x_1)+1,n_0}+...+p_{\varepsilon_{n_0}(x_2)-1,n_0}- p_{\varepsilon_{n_0}(x_1),n_0})\ge 0
$$
та у випадку парного $n_0$
$$
\tilde{F}(x_2)-\tilde{F}(x_1) =\left(\prod^{n_0-1} _{j=1} {\tilde{p}_{\varepsilon_{j}(x_2),j}}\right)\cdot (\tilde{\beta}_{\varepsilon_{n_0}(x_2),n_0}-\tilde{\beta}_{\varepsilon_{n_0}(x_1),n_0}+\kappa)= 
$$
$$
=\left(\prod^{n_0-1} _{i=1} {\tilde{p}_{i,\varepsilon_{i}(x_2)}}\right)\cdot (p_{d_{n_0}-1-\varepsilon_{n_0}(x_1),n_0}+p_{d_{n_0}-\varepsilon_{n_0}(x_1),n_0}+...+
$$
$$
+p_{d_{n_0}-2-\varepsilon_{n_0}(x_2),n_0}-p_{d_{n_0}-1-\varepsilon_{n_0}(x_1),n_0})\ge 0.
$$

Цілком очевидно, що коли коли всі елементи $p_{i,n}$ матриці  $P$ є додатними, справедливою буде умова $\tilde{F}(x_2)-\tilde{F}(x_1) >0$. 
\end{proof}

Нехай елементи $p_{i,n}$ матриці $P$ є невід'ємними. 

Нехай $\eta$ --- випадкова величина, представлена у вигляді наступного канторівського розкладу 
$$
\eta= \frac{\xi_1}{d_1}+\frac{\xi_2}{d_1d_2}+\frac{\xi_3}{d_1d_2d_3}+...+\frac{\xi_{k}}{d_1d_2...d_{k}}+...\equiv \Delta^{D} _{\xi_1\xi_2...\xi_{k}...},
$$
де
$$
\xi_k=\begin{cases}
\varepsilon_k,&\text{якщо $k$ --- непарне;}\\
d_k-1-\varepsilon_k,&\text{якщо $k$ --- парне.}
\end{cases}
$$
та  цифри $\xi_k$ $(k=1,2,3,...)$ є випадковими і набувають значень $0,1,...,d_k-1$ з ймовірностями ${p}_{0,k}, {p}_{1,k}, ..., {p}_{d_k-1,k}$. Тобто, $\xi_k$ --- незалежні  та $P\{\xi_k=i_k\}={p}_{i_k,k}$, $i_k \in A_{d_k}$.

В силу  означення функції розподілу та  рівностей
$$
\{\eta<x\}=\{\xi_1<\varepsilon_1(x)\}\cup\{\xi_1=\varepsilon_1(x),\xi_2<d_2-1-\varepsilon_2(x)\}\cup...\cup
$$
$$
\cup\{\xi_1=\varepsilon_1(x),\xi_2=d_2-1-\varepsilon_2(x),...,\xi_{2k-1}<\varepsilon_{2k-1}(x)\}\cup
$$
$$
\cup\{\xi_1=\varepsilon_1(x),\xi_2=d_2-1-\varepsilon_2(x),...,\xi_{2k-1}=\varepsilon_{2k-1}(x),\xi_{2k}<d_{2k}-1-\varepsilon_{2k}(x)\}\cup...,
$$
$$
P\{\xi_1=\varepsilon_1(x),\xi_2=d_2-1-\varepsilon_2(x),...,\xi_{2k-1}<\varepsilon_{2k-1}(x)\}=
$$
$$
=\beta_{\varepsilon_{2k-1}(x),2k-1}\prod^{2k-2} _{j=1} {\tilde{p}_{\varepsilon_{j}(x),j}}
$$
i
$$
P\{\xi_1=\varepsilon_1(x),\xi_2=d_2-1-\varepsilon_2(x),...,\xi_{2k}<d_{2k}-1-\varepsilon_{2k}(x)\}=
$$
$$
=\beta_{d_{2k}-1-\varepsilon_{2k}(x),2k}\prod^{2k-1} _{j=1} {\tilde{p}_{\varepsilon_{j}(x),j}},
$$
наслідком останньої теореми є наступна лема.

\begin{lemma}
Функція розподілу $\tilde{F}_{\eta}$ випадкової величини $\eta$ має вигляд
$$
\tilde{F}_{\eta}(x)=\begin{cases}
0,&\text{ $x< 0$;}\\
\beta_{\varepsilon_1(x),1}+\sum^{\infty} _{k=2} {\left[\tilde{\beta}_{\varepsilon_k(x),k} \prod^{k-1} _{j=1} {\tilde{p}_{\varepsilon_j(x),j}}\right]},&\text{ $0 \le x<1$;}\\
1,&\text{ $x\ge 1$,}
\end{cases}
$$
де $\tilde{p}_{\varepsilon_{j(x),j}} \ge 0$.
\end{lemma}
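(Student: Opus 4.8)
The plan is to read off $\tilde{F}_{\eta}(x)=P\{\eta<x\}$ from the digit decomposition of the event $\{\eta<x\}$ together with the independence of the digits $\xi_k$, both of which have essentially already been recorded just above the statement. First dispose of the two trivial ranges: since $\eta\in[0;1]$ with probability one, for $x<0$ the event $\{\eta<x\}$ is empty, so $\tilde{F}_{\eta}(x)=0$, whereas for $x>1$ it is the whole sample space, so $\tilde{F}_{\eta}(x)=1$; for $x=1$ we have $\{\eta<1\}=\{\eta\le 1\}\setminus\{\eta=1\}$, and $\{\eta=1\}=\{\xi_k=d_k-1\ \text{for all }k\}$ has probability $\prod_{k}p_{d_k-1,k}=0$ by condition $3^{\circ}$, so $\tilde{F}_{\eta}(1)=1$ as well.

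For $0\le x<1$ fix the expansion $x=\Delta^{-(d_n)}_{\varepsilon_1(x)\varepsilon_2(x)\dots}$ and pass to the ordinary Cantor expansion through the correspondence $g$, namely $x=\Delta^{D}_{\zeta_1\zeta_2\dots}$ with $\zeta_k=\varepsilon_k(x)$ when $k$ is odd and $\zeta_k=d_k-1-\varepsilon_k(x)$ when $k$ is even; recall $\eta=\Delta^{D}_{\xi_1\xi_2\dots}$. Since the lexicographic order of digit sequences matches the order of the corresponding reals (discrepancies occur only at the countably many $(d_n)$-rational values, and each single value $\{\eta=a\}$ is produced by at most two fixed digit sequences and hence has probability $0$ by $3^{\circ}$), one obtains, up to an event of probability zero, the pairwise disjoint decomposition
$$
\{\eta<x\}=\bigcup_{k=1}^{\infty}\{\xi_1=\zeta_1,\ \dots,\ \xi_{k-1}=\zeta_{k-1},\ \xi_k<\zeta_k\}
$$
displayed before the statement; disjointness is immediate, since the $k$-th event forces $\xi_k<\zeta_k$ while every later one forces $\xi_k=\zeta_k$.

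Now apply countable additivity and the independence of the $\xi_k$:
$$
\tilde{F}_{\eta}(x)=\sum_{k=1}^{\infty}P\{\xi_k<\zeta_k\}\prod_{j=1}^{k-1}P\{\xi_j=\zeta_j\}.
$$
It remains to match the factors with the symbols $\tilde{\beta}$ and $\tilde{p}$. Splitting the definitions of $\tilde{p}_{\varepsilon_j(x),j}$ and $\tilde{\beta}_{\varepsilon_k(x),k}$ according to the parity of the index, one checks at once that $P\{\xi_j=\zeta_j\}=p_{\zeta_j,j}=\tilde{p}_{\varepsilon_j(x),j}$ and $P\{\xi_k<\zeta_k\}=\sum_{i=0}^{\zeta_k-1}p_{i,k}=\beta_{\zeta_k,k}=\tilde{\beta}_{\varepsilon_k(x),k}$ for every $k$ (when $\zeta_k=0$ the latter sum is empty and equals $\beta_{0,k}=0$, in agreement with $\tilde{\beta}$). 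Since $1$ is odd, the $k=1$ term equals $\tilde{\beta}_{\varepsilon_1(x),1}=\beta_{\varepsilon_1(x),1}$ times the empty product $1$, so the series is exactly $\beta_{\varepsilon_1(x),1}+\sum_{k=2}^{\infty}\tilde{\beta}_{\varepsilon_k(x),k}\prod_{j=1}^{k-1}\tilde{p}_{\varepsilon_j(x),j}$, which is the asserted expression; and $\tilde{p}_{\varepsilon_j(x),j}\ge 0$ because all entries $p_{i,n}$ of $P$ were assumed non-negative.

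The only step that genuinely needs care is the decomposition of $\{\eta<x\}$ at the $(d_n)$-rational values of $x$: one must make sure that both admissible expansions of such an $x$, as well as the single point $\{\eta=x\}$, contribute nothing, and this is exactly where condition $3^{\circ}$ is used. Alternatively, one may first invoke the earlier lemma that $\tilde{F}$ is well defined at every point of $[0;1]$ --- its proof applies verbatim to the right-hand side here, since that expression coincides with $\tilde{F}$ on $[0;1)$ --- and then carry out the computation using whichever expansion of $x$ is more convenient. Everything else is routine bookkeeping with the parity-dependent definitions.
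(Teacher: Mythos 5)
Your proof is correct and follows essentially the same route as the paper: the paper establishes the lemma precisely by the displayed decomposition of $\{\eta<x\}$ into the disjoint events determined by the first position where $\xi$ deviates from the digits of $x$ (after the correspondence $g$), computes each term's probability by independence of the $\xi_k$, and sums, which is exactly your argument. Your extra care with the ranges $x<0$, $x\ge 1$ and with the probability-zero contribution of the $(d_n)$-rational points via condition $3^{\circ}$ only makes explicit what the paper leaves implicit.
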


\section{ Інтегральні властивості.}

\begin{theorem}
Функція $y=\tilde{F}(x)$ є інтегровною за Лебегом, причому
$$
\int^1 _0 {\tilde{F}(x)dx}=\sum^{\infty} _{n=1} {\frac{\tilde{\beta}_{0,n}+\tilde{\beta}_{1,n}+\tilde{\beta}_{2,n}+...+\tilde{\beta}_{d_n-1,n}}{d_1d_2...d_n}}.
$$
\end{theorem}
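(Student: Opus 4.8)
The plan is: since $\tilde F$ is continuous on $[0;1]$ by the preceding theorem, it is Riemann (hence Lebesgue) integrable, so $\int_0^1\tilde F(x)\,dx$ exists and the only task is to evaluate it. I would do this by iterating the self-affine structure encoded in the functional equations of Section~\ref{3}, organised through the family of ``parity-shifted companions'' of $\tilde F$. For $k\ge 0$ let $G_k\colon[0;1]\to\mathbb R$ be defined by the same formula as $\tilde F$ but with the bases $(d_{k+1},d_{k+2},\dots)$ and the matrix entries $(p_{i,k+n})$, the roles of ``odd $n$'' and ``even $n$'' in the definitions of $\tilde\beta$ and $\tilde p$ interchanged when $k$ is odd; thus $G_0=\tilde F$. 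Conditions $1^\circ$--$3^\circ$ pass to the truncated matrix, so each $G_k$ is a well-defined, continuous and bounded function on $[0;1]$ by exactly the argument that established these properties for $\tilde F$. Put $J_k=\int_0^1 G_k(x)\,dx$; we want $J_0$.

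The key step is the self-affine identity. On the interval $\left[\frac{j}{d_{k+1}};\frac{j+1}{d_{k+1}}\right]$ the first of the functional relations of Section~\ref{3}, read with the parity convention of $G_k$, gives
$$
G_k(x)=\tilde\beta_{j,k+1}+\tilde p_{j,k+1}\,G_{k+1}(z),\qquad z=(1+j)-d_{k+1}x,
$$
where the affine ``tail coordinate'' $z$ runs over all of $[0;1]$ as $x$ runs over that interval (it decreases, because of the alternating representation, but this is irrelevant for the integral). Integrating over $[0;1]$, splitting over the $d_{k+1}$ intervals and changing the variable in each yields
$$
J_k=\frac{1}{d_{k+1}}\sum_{j=0}^{d_{k+1}-1}\tilde\beta_{j,k+1}+\frac{1}{d_{k+1}}\Bigl(\sum_{j=0}^{d_{k+1}-1}\tilde p_{j,k+1}\Bigr)J_{k+1}.
$$
Since $j\mapsto d_{k+1}-1-j$ is a permutation of $\{0,\dots,d_{k+1}-1\}$, one has $\sum_{j}\tilde p_{j,k+1}=\sum_{j}p_{j,k+1}=1$ by $2^\circ$, while $\sum_{j}\tilde\beta_{j,k+1}=B_{k+1}$, where $B_n:=\tilde\beta_{0,n}+\tilde\beta_{1,n}+\dots+\tilde\beta_{d_n-1,n}$ is precisely the numerator in the statement. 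Hence the recursion $J_k=\dfrac{B_{k+1}}{d_{k+1}}+\dfrac{1}{d_{k+1}}\,J_{k+1}$.

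Finally I would iterate this recursion from $k=0$: for every $N$,
$$
J_0=\sum_{n=1}^{N}\frac{B_n}{d_1d_2\cdots d_n}+\frac{J_N}{d_1d_2\cdots d_N}.
$$
Because each $d_n\ge 2$ we have $d_1d_2\cdots d_N\to\infty$, and $(J_N)$ is bounded (the $G_k$ are uniformly bounded, by the same estimates on $P$ that underlie their continuity), so the remainder tends to $0$ and
$$
\int_0^1\tilde F(x)\,dx=\sum_{n=1}^{\infty}\frac{B_n}{d_1d_2\cdots d_n}=\sum_{n=1}^{\infty}\frac{\tilde\beta_{0,n}+\tilde\beta_{1,n}+\dots+\tilde\beta_{d_n-1,n}}{d_1d_2\cdots d_n}.
$$
The step I expect to be most delicate is the self-affine identity — in particular keeping the parity shift between $G_k$ and $G_{k+1}$ straight and verifying that the companion tail function is literally $G_{k+1}$; once that is pinned down, the rest is a routine geometric cascade. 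As a check, when all $p_{i,n}\ge 0$, so that $\tilde F$ is the distribution function of the random variable $\eta$ of Section~\ref{3}, the same value also follows from $\int_0^1\tilde F=1-\mathsf{M}\eta$ with $\mathsf{M}\eta=\sum_{n}(d_1\cdots d_n)^{-1}\sum_{i=0}^{d_n-1} i\,p_{i,n}$.
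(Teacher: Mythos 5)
Your proposal is correct and follows essentially the same route as the paper: both split the integral over the $d_{k+1}$ first-level cylinders, apply the self-affine functional equations, change variables to obtain the recursion $J_k=\frac{1}{d_{k+1}}\sum_j\tilde\beta_{j,k+1}+\frac{1}{d_{k+1}}J_{k+1}$ (using $\sum_j p_{j,k+1}=1$), and iterate, letting the remainder $\frac{J_N}{d_1\cdots d_N}$ vanish. The only difference is bookkeeping — you stay in the alternating picture with parity-shifted companions $G_k$, while the paper conjugates via $y=g(x)$ to the ordinary Cantor representation and writes the tail integrals as $\int_0^1 F(\hat{\varphi}^n(y))\,d(\hat{\varphi}^n(y))$ — and your treatment of the remainder term is, if anything, slightly more explicit than the paper's.
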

\begin{proof}
Позначимо $y=g(x)$  (функцію $g$ було означено в пункті \ref{3}). Використовуючи означення функції $\tilde F$ ( в тому числі і властивості, що слідують з різних способів задання досліджуваної функції)  та  властивості інтеграла Лебега, отримаємо
$$
\int^1 _0 {\tilde{F}(x)dx}=\int^{\frac{1}{d_1}} _0 {{F}(y)dy}+\int^{\frac{2}{d_1}} _{\frac{1}{d_1}} {{F}(y)dy}+...+\int^{1} _{\frac{d_1-1}{d_1}} {{F}(y)dy}=
$$
$$
=\int^{\frac{1}{d_1}} _0 {p_{0,1}{F}(\hat{\varphi}(y))dy}+\int^{\frac{2}{d_1}} _{\frac{1}{d_1}} {\left[p_{0,1}+p_{1,1}{F}(\hat{\varphi}(y))\right]dy}+
$$
$$
+\int^{\frac{3}{d_1}} _{\frac{2}{d_1}} {\left[\beta_{2,1}+p_{2,1}{F}(\hat{\varphi}(y))\right]dy}+...+\int^{1} _{\frac{d_1-1}{d_1}} {\left[\beta_{d_1-1,1}+p_{d_1-1,1}{F}(\hat{\varphi}(y))\right]dy}.
$$

Оскільки $y=\frac{\varepsilon_1}{d_1}+\frac{1}{d_1}\hat{\varphi}(y)$ і, як наслідок $dy=\frac{1}{d_1}d(\hat{\varphi}(y))$, то
$$
\int^1 _0 {\tilde{F}(x)dx}=\frac{p_{0,1}}{d_1}\int^{1} _{0} {{F}(\hat{\varphi}(y))d(\hat{\varphi}(y))}+\beta_{1,1}y|^{\frac{2}{d_1}} _{\frac{1}{d_1}}+\frac{p_{1,1}}{d_1}\int^{1} _{0} {{F}(\hat{\varphi}(y))d(\hat{\varphi}(y))}+\beta_{2,1}y|^{\frac{3}{d_1}} _{\frac{2}{d_1}}+
$$
$$
+\frac{p_{2,1}}{d_1}\int^{1} _{0} {{F}(\hat{\varphi}(y))d(\hat{\varphi}(y))}+...+
\beta_{d_1-1,1}y|^{1} _{\frac{d_1-1}{d_1}}+\frac{p_{d_1-1,1}}{d_1}\int^{1} _{0} {{F}(\hat{\varphi}(y))d(\hat{\varphi}(y))}=
$$
$$
=\frac{\beta_{1,1}+\beta_{2,1}+...+\beta_{d_1-1,1}}{d_1}+\frac{1}{d_1}\int^{1} _{0} {{F}(\hat{\varphi}(y))d(\hat{\varphi}(y))}.
$$
Аналогічно, враховуючи взаємозв'язок D-зображення та нега-$(d_n)$-зображення, на другому кроці отримаємо
$$
\int^{1} _{0} {{F}(\hat{\varphi}(y))d(\hat{\varphi}(y))}=\int^{1} _{\frac{d_2-1}{d_2}} {p_{0,2}{F}(\hat{\varphi}^2(y))d(\hat{\varphi}(y))}+
$$
$$
+\int^{\frac{d_2-1}{d_2}} _{\frac{d_2-2}{d_2}} {\left[\beta_{1,2}+p_{1,2}{F}(\hat{\varphi}^2(y))\right]d(\hat{\varphi}(y))}+...+
\int^{\frac{1}{d_2}} _{0} {\left[\beta_{d_2-1,2}+p_{d_2-1,2}{F}(\hat{\varphi}^2(y))\right]d(\hat{\varphi}(y))}.
$$
Оскільки $\hat{\varphi}(y)=\frac{d_2-1-\varepsilon_2}{d_2}+\frac{1}{d_2}\hat{\varphi}^2(y)$ та $d(\hat{\varphi}(y))  = \frac{1}{d_2}d(\hat{\varphi}^2(y))$, отримаємо
$$
\int^{1} _{0} {{F}(\hat{\varphi}(y))d(\hat{\varphi}(y))}=\frac{p_{0,2}}{d_2}\int^{1} _{0} {{F}(\hat{\varphi}^2(y))d(\hat{\varphi}^2(y))}+\beta_{1,2}y|^{\frac{d_2-1}{d_2}} _{\frac{d_2-2}{d_2}}+
$$
$$
+\frac{p_{1,2}}{d_2}\int^{1} _{0} {{F}(\hat{\varphi}^2(y))d(\hat{\varphi}^2(y))}+...+
\beta_{d_2-1,2}y|^{\frac{1}{d_2}} _{0}+\frac{p_{d_2-1,2}}{d_2}\int^{1} _{0} {{F}(\hat{\varphi}^2(y))d(\hat{\varphi}^2(y))}=
$$
$$
=\frac{\beta_{1,2}+\beta_{2,2}+...+\beta_{d_2-1,2}}{d_2}+\frac{1}{d_2}\int^{1} _{0} {{F}(\hat{\varphi}^2(y))d(\hat{\varphi}^2(y))}.
$$

Отже, 
$$
\int^1 _0 {\tilde{F}(x)dx}=\frac{\beta_{1,1}+\beta_{2,1}+...+\beta_{d_1-1,1}}{d_1}+
$$
$$
+\frac{\beta_{1,2}+\beta_{2,2}+...+\beta_{d_2-1,2}}{d_1d_2}+\frac{1}{d_1d_2}\int^{1} _{0} {{F}(\hat{\varphi}^2(y))d(\hat{\varphi}^2(y))}.
$$

По аналогії, на кроці $n$ отримаємо
$$
\int^1 _0 {\tilde{F}(x)dx}=\sum^{n} _{j=1} {\frac{\tilde{\beta}_{0,j}+\tilde{\beta}_{1,j}+\tilde{\beta}_{2,j}+...+\tilde{\beta}_{d_j-1,j}}{d_1d_2...d_j}}+
$$
$$
+\frac{1}{d_1d_2...d_n}\int^{1} _{0} {{F}(\hat{\varphi}^n(y))d(\hat{\varphi}^n(y))}.
$$

Продовжуючи процес до нескінченності, отримаємо
$$
\int^1 _0 {\tilde{F}(x)dx}=\sum^{\infty} _{n=1} {\frac{\tilde{\beta}_{0,n}+\tilde{\beta}_{1,n}+\tilde{\beta}_{2,n}+...+\tilde{\beta}_{d_n-1,n}}{d_1d_2...d_n}}.
$$
\end{proof}

\section{ Диференціальні властивості у випадку  невід'ємності елементів  матриці $P$.}
Нехай елементи $p_{i,n}$ матриці $P$ є невід'ємними. 

\begin{lemma}
\label{lemma5}
Справедливими є наступні рівності:
\begin{enumerate}
\item \label{1-lemma5}
$$
\mu_{\tilde{F}}\left(\Delta^{-(d_n)} _{c_1c_2...c_n}\right)=\prod^{n} _{j=1} {\tilde{p}_{c_j,j}}\ge 0.
$$
\item  Нехай $x_0=\Delta^{-(d_n)} _{\varepsilon_1\varepsilon_2...\varepsilon_n...}$ --- нега-$(d_n)$-ірраціональна точка,  тоді
$$
{\tilde{F}}^{'}(x_0)=\lim_{n \to \infty}{\left(\prod^{n} _{j=1} {d_j\tilde{p}_{\varepsilon_j,j}}\right)}.
$$
\end{enumerate}
\end{lemma}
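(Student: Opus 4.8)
The plan is to prove (1) directly from the self‑similar (de Rham type) identity already recorded in Section~\ref{3} together with the continuity and monotonicity of $\tilde F$, and then to deduce (2) from (1) via a length computation for cylinders and a comparison argument for monotone functions. First, for (1): on the cylinder $C=\Delta^{-(d_n)}_{c_1c_2\ldots c_n}$ one has, from the formula of Section~\ref{3}, $\tilde F(x)=A+\big(\prod_{j=1}^n\tilde p_{c_j,j}\big)\,\tilde F(u)$, where $A=\beta_{c_1,1}+\sum_{m=2}^n\tilde\beta_{c_m,m}\prod_{j=1}^{m-1}\tilde p_{c_j,j}$ and the factor $\prod_{j=1}^n\tilde p_{c_j,j}$ are constant on $C$, and $u=u(x)$ runs through all of $[0,1]$ (equivalently $\tilde F(u)$ runs through all of $[0,1]$) as the tail digits $\varepsilon_{n+1}(x),\varepsilon_{n+2}(x),\ldots$ vary. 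Under the standing hypothesis $p_{i,n}\ge 0$ the function $\tilde F$ is continuous and non‑decreasing on $[0,1]$ with $\tilde F(0)=0$, $\tilde F(1)=1$; hence the range of $\tilde F$ over the closed interval $C$ is exactly $[A,\,A+\prod_{j=1}^n\tilde p_{c_j,j}]$, an interval of length $\prod_{j=1}^n\tilde p_{c_j,j}\ge 0$, so $\tilde F(\sup C)-\tilde F(\inf C)=\prod_{j=1}^n\tilde p_{c_j,j}$. Since $\tilde F$ is continuous, the Lebesgue--Stieltjes measure satisfies $\mu_{\tilde F}([a,b])=\tilde F(b)-\tilde F(a)$ for every closed interval, giving $\mu_{\tilde F}(C)=\prod_{j=1}^n\tilde p_{c_j,j}$. (As a check one may instead note that $\mu_{\tilde F}$ is the distribution of the random variable $\eta$ of Section~\ref{3} and that $\mu_{\tilde F}(C)=P\{\xi_1=c_1,\ \xi_2=d_2-1-c_2,\ \ldots\}=\prod_{j=1}^n\tilde p_{c_j,j}$ by independence of the $\xi_k$.)

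For (2) I would first compute the Lebesgue length of a rank-$n$ cylinder. Writing $x=\Delta^{-(d_n)}_{c_1\ldots c_n\varepsilon_{n+1}\varepsilon_{n+2}\ldots}$, the terms of index $\le n$ contribute a constant while the tail equals $\frac{(-1)^n}{d_1\cdots d_n}\sum_{m\ge 1}\frac{1+\varepsilon_{n+m}}{d_{n+1}\cdots d_{n+m}}(-1)^{m+1}$; the inner sum is a representation of type~\eqref{eq: alternating Cantor series 2} for the shifted sequence $(d_{n+m})_m$, hence ranges over $[0,1]$, so the tail ranges over an interval of length $1/(d_1\cdots d_n)$. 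Therefore $|\Delta^{-(d_n)}_{c_1\ldots c_n}|=1/(d_1\cdots d_n)$, and dividing the formula of (1) by this length yields the ratio $\prod_{j=1}^n d_j\,\tilde p_{c_j,j}$. Now fix a non-$(d_n)$-rational point $x_0=\Delta^{-(d_n)}_{\varepsilon_1\varepsilon_2\ldots}$. Since the endpoints of cylinders are $(d_n)$-rational, $x_0$ lies in the open interval $(a_n,b_n)$ with $[a_n,b_n]=\Delta^{-(d_n)}_{\varepsilon_1\ldots\varepsilon_n}$, and $0<b_n-x_0\le b_n-a_n=1/(d_1\cdots d_n)\to 0$ (likewise $0<x_0-a_n\to 0$). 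I then have to show $\tilde F'(x_0)=L$, where $L:=\lim_n\prod_{j=1}^n d_j\,\tilde p_{\varepsilon_j,j}$ in $[0,+\infty]$.

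The routine half: if $\tilde F'(x_0)=L$ exists, then $\frac{\tilde F(b_n)-\tilde F(x_0)}{b_n-x_0}\to L$ and $\frac{\tilde F(x_0)-\tilde F(a_n)}{x_0-a_n}\to L$, and since $\frac{\tilde F(b_n)-\tilde F(a_n)}{b_n-a_n}$ is the convex combination of these two ratios with weights $\frac{b_n-x_0}{b_n-a_n}$ and $\frac{x_0-a_n}{b_n-a_n}$, it also tends to $L$; by (1) this equals $\prod_{j=1}^n d_j\,\tilde p_{\varepsilon_j,j}$, so $L=\lim_n\prod_{j=1}^n d_j\,\tilde p_{\varepsilon_j,j}$. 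Since a monotone function is differentiable almost everywhere, this already identifies $\tilde F'$ at almost every point.

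The hard part — and the main obstacle — is the converse: assuming $L:=\lim_n\prod_{j=1}^n d_j\,\tilde p_{\varepsilon_j,j}$ exists, prove that $\tilde F'(x_0)$ exists and equals $L$. Here I would, for small $h>0$, take the largest $n=n(h)$ with $b_n-x_0\ge h$ (so $n(h)\to\infty$), and sandwich $\tilde F(x_0+h)-\tilde F(x_0)$ between $\mu_{\tilde F}$-masses of cylinders of rank near $n$: from above it is at most $\tilde F(b_n)-\tilde F(x_0)\le\mu_{\tilde F}([a_n,b_n])$, and from below $x_0+h$ lies in a rank-$(n+1)$ sub-cylinder strictly to the right of the one containing $x_0$, so $\tilde F(x_0+h)-\tilde F(x_0)$ is at least the $\mu_{\tilde F}$-mass of a full sub-cylinder; dividing by $h$, which is comparable to $1/(d_1\cdots d_n)$, and invoking (1) together with convergence of $\prod_{j=1}^k d_j\,\tilde p_{\varepsilon_j,j}$ forces both bounds to $L$, and the case $h<0$ is symmetric. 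The delicate point is that the lower bound is genuinely useful only when $h$ is comparable to the length of the cylinders being used, which fails when $x_0$ happens to sit extremely close to a cylinder endpoint at some scale; there one must descend to a deeper cylinder before the comparison works. It is exactly the non-$(d_n)$-rationality of $x_0$ (so $x_0$ is never an endpoint of a cylinder) together with the nested‑interval structure of the cylinders that makes it possible to carry this estimate through uniformly.
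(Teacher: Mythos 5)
Your part (1) is correct. You derive the increment of $\tilde F$ over a cylinder from the self-similar representation $\tilde F(x)=A+\bigl(\prod_{j=1}^{n}\tilde p_{c_j,j}\bigr)f(\hat\varphi^{n}(y))$ together with continuity, monotonicity and $\tilde F(0)=0$, $\tilde F(1)=1$; the paper instead evaluates $\tilde F$ at the two explicit endpoints of the cylinder (written out separately for even and odd rank) and telescopes the resulting series using $\beta_{d_m-1,m}=1-p_{d_m-1,m}$. The two computations are equivalent; yours is shorter and also matches the probabilistic identification of $\mu_{\tilde F}$ with the distribution of $\eta$, which the paper sets up but does not invoke here. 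Your computation $|\Delta^{-(d_n)}_{c_1\ldots c_n}|=1/(d_1\cdots d_n)$ is also correct and is used implicitly in the paper.

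For part (2) you have correctly identified the crux, but you have not closed it, and you should know that the paper does not close it either: its entire proof of (2) is the chain of equalities $\tilde F'(x_0)=\lim_n \mu_{\tilde F}(C_n)/|C_n|=\lim_n\prod_{j=1}^{n}d_j\tilde p_{\varepsilon_j,j}$, with the first equality asserted without argument. Your sandwich sketch is the right idea, but the step you flag as "delicate" is a genuine gap as written: when $d_{n+1}$ is large and $x_0+h$ falls only slightly into the neighbouring rank-$(n+1)$ subcylinder, the lower bound $\mu_{\tilde F}(C_{n+1}')/h$ with $h\asymp|C_n|=d_{n+1}|C_{n+1}|$ is off by a factor that is not controlled by the convergence of the products $\prod_j d_j\tilde p_{\varepsilon_j,j}$ alone; "descending to a deeper cylinder" changes which subcylinder's mass you are comparing against and the assertion that non-$(d_n)$-rationality makes the estimate uniform is exactly what needs proof. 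To actually finish one must either impose a regularity hypothesis (e.g. bounded $(d_n)$, or control of the ratios $\tilde p_{i,n}/\tilde p_{i',n}$ within a rank), or iterate the increment formula of part (1) over the sub-cylinders separating $x_0$ from $x_0+h$ and use convergence of the products at every rank, or weaken the conclusion to a statement about the derivative along the cylinder filtration (which is all the paper's displayed limit literally establishes). So your proposal reproduces the paper's argument, is more honest about where the difficulty lies, but leaves the same hole open.
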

\begin{proof}\begin{enumerate}\item
Обчислимо приріст $\mu_{\tilde{F}}$  функції $\tilde{F}$  на циліндрах  $\Delta^{-(d_n)} _{c_1c_2...c_n}$. Тобто, на відрізках
$$
\left[\Delta^{-(d_n)} _{c_1c_2...c_{2n-1}[d_{2n}-1]0[d_{2n+2}-1]0[d_{2n+4}-1]...};\Delta^{-(d_n)} _{c_1c_2...c_{2n-1}0[d_{2n+1}-1]0[d_{2n+3}-1]...}\right],
$$
$$
\left[\Delta^{-(d_n)} _{c_1c_2...c_{2n}0[d_{2n+2}-1]0[d_{2n+4}-1]...};\Delta^{-(d_n)} _{c_1c_2...c_{2n}[d_{2n+1}-1]0[d_{2n+3}-1]0[d_{2n+5}-1]...}\right].
$$
$$
\mu_{\tilde{F}}\left(\Delta^{-(d_n)} _{c_1c_2...c_{2n-1}}\right)=\tilde{F}\left(\Delta^{-(d_n)} _{c_1c_2...c_{2n-1}0[d_{2n+1}-1]0[d_{2n+3}-1]...}\right)-
$$
$$
-\tilde{F}\left(\Delta^{-(d_n)} _{c_1c_2...c_{2n-1}[d_{2n}-1]0[d_{2n+2}-1]0[d_{2n+4}-1]...}\right)=
$$
$$
=\left(\prod^{2n-1} _{j=1} {\tilde{p}_{c_j,j}}\right)(\beta_{d_{2n}-1,2n}+\beta_{d_{2n+1}-1,2n+1}p_{d_{2n}-1,2n}
+\beta_{d_{2n+2}-1,2n+2}p_{d_{2n}-1,2n}p_{d_{2n+1}-1,2n+1}+...)=
$$
$$
=\left(\prod^{2n-1} _{j=1} {\tilde{p}_{c_j,j}}\right)(1-p_{d_{2n}-1,2n}+(1-p_{d_{2n+1}-1,2n+1})p_{d_{2n}-1,2n}+
$$
$$
+(1-p_{d_{2n+2}-1,2n+2})p_{d_{2n}-1,2n}p_{d_{2n+1}-1,2n+1}+...)=\left(\prod^{2n-1} _{j=1} {\tilde{p}_{c_j,j}}\right).
$$
Аналогічно
$$
\mu_{\tilde{F}}\left(\Delta^{-(d_n)} _{c_1c_2...c_{2n}}\right)=\tilde{F}\left(\Delta^{-(d_n)} _{c_1c_2...c_{2n}[d_{2n+1}-1]0[d_{2n+3}-1]0[d_{2n+5}-1]...}\right)-
$$
$$
-\tilde{F}\left(\Delta^{-(d_n)} _{c_1c_2...c_{2n}0[d_{2n+2}-1]0[d_{2n+4}-1]...}\right)=
$$
$$
=\left(\prod^{2n} _{j=1} {\tilde{p}_{c_j,j}}\right)(\beta_{d_{2n+1}-1,2n+1}+\beta_{d_{2n+2}-1,2n+2}p_{d_{2n+1}-1,2n+1}+
$$
$$
+\beta_{d_{2n+3}-1,2n+3}p_{d_{2n+1}-1,2n+1}p_{d_{2n+2}-1,2n+2}+...)=\left(\prod^{2n} _{j=1} {\tilde{p}_{c_j,j}}\right).
$$

Таким чином,
$$
\mu_{\tilde{F}}\left(\Delta^{-(d_n)} _{c_1c_2...c_n}\right)=\left(\prod^{n} _{j=1} {\tilde{p}_{c_j,j}}\right)\ge 0.
$$

\item Знайдемо похідну функції $\tilde{F}$ в нега-$(d_n)$-ірраціональній точці $x_0=~\Delta^{-(d_n)} _{\varepsilon_1\varepsilon_2...\varepsilon_n...}$. Оскільки
$$
x_0=\Delta^{-(d_n)} _{\varepsilon_1\varepsilon_2...\varepsilon_n...}=\bigcap^{\infty} _{n=1} {\Delta^{-(d_n)} _{\varepsilon_1\varepsilon_2...\varepsilon_n}},
$$
$$
\tilde{F}^{'}(x_0)=\lim_{n \to \infty} {\frac{\mu_{\tilde{F}}\left(\Delta^{-(d_n)} _{\varepsilon_1\varepsilon_2...\varepsilon_n}\right)}{|\Delta^{-(d_n)} _{\varepsilon_1\varepsilon_2...\varepsilon_n}|}}=\lim_{n \to \infty} {\frac{\prod^{n} _{j=1} {\tilde{p}_{\varepsilon_j,j}}}{\frac{1}{d_1d_2...d_n}}}=
$$
$$
=\lim_{n \to \infty} {\left(\prod^{n} _{j=1} {d_j\tilde{p}_{\varepsilon_j,j}}\right)}=\prod^{\infty} _{j=1} {\left({d_j\tilde{p}_{\varepsilon_j,j}}\right)}.~~~\square
$$
\end{enumerate}

Оскільки досліджувана функція  є неперервною та монотонною, то вона (згідно теореми Лебега) має скінченну похідну майже скрізь в розумінні міри Лебега. Проте, у випадку, коли $a_n=d_n\tilde{p}_{\varepsilon_n,n}>~1$ для всіх натуральних чисел $n$ за винятком, можливо, скінченної кількості, отримаємо $\tilde{F}^{'}(x_0)=\infty$. Тому:

\begin{itemize}
\item у випадку, коли для скінченної  множини значень  $n$ справджується $a_n \ge 1$, отримаємо $\tilde{F}^{'}(x_0)=0$;
\item  у випадку, коли   $a_n=1$ для всіх $n \in \mathbb N$ (що є справедливим лише для функції  $\tilde{F}(x)=x$), отримаємо $\tilde{F}^{'}(x_0)=1$;
\item у випадку, коли лише для скінченної кількості номерів справджується умова $p_{\varepsilon_n,n}\ne \frac{1}{d_n}$, отримаємо $0\le \tilde{F}^{'}(x_0)<\infty$.
\end{itemize}
\end{proof}

\section{ Самоафінність.}

\begin{theorem}
Якщо елементи $p_{i,n}$ матриці $P$ є додатними, то графік $\Gamma_{\tilde{F}}$ функції $\tilde{F}$ в просторі $\mathbb R^2$ є множиною виду
$$ 
\Gamma_{\tilde{F}}=\bigcup_{x \in [0;1]}{(x; ... \circ \psi_{\varepsilon_n,n}\circ ...\circ \psi_{\varepsilon_2,2} \circ \psi_{\varepsilon_1,1}(x))},
$$ 
де $x=\Delta^{D} _{\varepsilon_1[d_2-1-\varepsilon_2]\varepsilon_3[d_4-1-\varepsilon_4]...}$,
$$
\psi_{i_n,n}:
\left\{
\begin{aligned}
x^{'}&=\frac{1}{d_n}x+\frac{\omega_{i_n,n}}{d_n};\\
y^{'}& = \tilde{\beta}_{i_n,n}+\tilde{p}_{i_n,n}y,\\
\end{aligned}
\right.
$$
$$
\omega_{i_n,n}=\begin{cases}
i_n,&\text{якщо $n$ --- непарне;}\\
d_n-1-i_n,&\text{якщо $n$ --- парне,}
\end{cases}
$$
$i_n \in A_{d_n}$.
\end{theorem}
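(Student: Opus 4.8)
The plan is to realise $\Gamma_{\tilde F}$ as the limit set of the non-autonomous iterated function system $\{\psi_{i_n,n}\}$ by a direct computation: one evaluates the composition $\psi_{\varepsilon_1,1}\circ\psi_{\varepsilon_2,2}\circ\dots\circ\psi_{\varepsilon_N,N}$ at an arbitrary point of the unit square $Q=[0;1]\times[0;1]$, reads off the two coordinates of the result, and lets $N\to\infty$. Throughout I use the identity from Section~\ref{3} that the real number $x=\Delta^{-(d_n)}_{\varepsilon_1\varepsilon_2\dots}$ has $D$-ary expansion $\Delta^{D}_{\varepsilon_1[d_2-1-\varepsilon_2]\varepsilon_3[d_4-1-\varepsilon_4]\dots}$, so that its $n$-th $D$-digit equals $\omega_{\varepsilon_n,n}$, and that in this coding $\tilde F(x)=\beta_{\varepsilon_1,1}+\sum_{n=2}^{\infty}\bigl(\tilde\beta_{\varepsilon_n,n}\prod_{j=1}^{n-1}\tilde p_{\varepsilon_j,j}\bigr)$.

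The first step is the two coordinate computations. Since $\psi_{i_n,n}$ acts on the abscissa by $u\mapsto\frac{1}{d_n}u+\frac{\omega_{i_n,n}}{d_n}$, a one-line induction gives, for every $(u,v)\in Q$,
$$
\bigl(\psi_{\varepsilon_1,1}\circ\dots\circ\psi_{\varepsilon_N,N}(u,v)\bigr)_1=\sum_{n=1}^{N}\frac{\omega_{\varepsilon_n,n}}{d_1d_2\cdots d_n}+\frac{u}{d_1d_2\cdots d_N}\ \longrightarrow\ x\quad(N\to\infty),
$$
because the first sum converges to $\Delta^{D}_{\omega_{\varepsilon_1,1}\omega_{\varepsilon_2,2}\dots}=x$ and $d_j\ge2$. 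Since $\psi_{i_n,n}$ acts on the ordinate by $v\mapsto\tilde\beta_{i_n,n}+\tilde p_{i_n,n}v$, the same induction gives
$$
\bigl(\psi_{\varepsilon_1,1}\circ\dots\circ\psi_{\varepsilon_N,N}(u,v)\bigr)_2=\beta_{\varepsilon_1,1}+\sum_{n=2}^{N}\Bigl(\tilde\beta_{\varepsilon_n,n}\prod_{j=1}^{n-1}\tilde p_{\varepsilon_j,j}\Bigr)+v\prod_{j=1}^{N}\tilde p_{\varepsilon_j,j},
$$
which is the $N$-th partial sum of $\tilde F(x)$ plus a remainder of absolute value at most $\prod_{j=1}^{N}|\tilde p_{\varepsilon_j,j}|$; by condition $3^{\circ}$ this remainder tends to $0$, so the ordinate tends to $\tilde F(x)$. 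Equivalently one may read these two facts off the functional equations of Section~\ref{3}: by~\eqref{eq:rivnist1(2)} one has $\frac{\omega_{\varepsilon_n,n}+\hat{\varphi}^{n}(x)}{d_n}=\hat{\varphi}^{n-1}(x)$, while $f(\hat{\varphi}^{n-1}(x))=\tilde\beta_{\varepsilon_n,n}+\tilde p_{\varepsilon_n,n}f(\hat{\varphi}^{n}(x))$ is precisely the ordinate component of $\psi_{\varepsilon_n,n}$, so $\psi_{\varepsilon_n,n}$ is the $n$-th inverse branch, carrying the graph point $(\hat{\varphi}^{n}(x);f(\hat{\varphi}^{n}(x)))$ to $(\hat{\varphi}^{n-1}(x);f(\hat{\varphi}^{n-1}(x)))$.

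The second step is the limit passage. Under the hypothesis $p_{i,n}>0$, together with $1^{\circ}$--$4^{\circ}$, each $\psi_{i,n}$ is a contraction of $Q$ into itself with abscissa contraction ratio $1/d_n$, hence the compacta $K_N:=\psi_{\varepsilon_1,1}\circ\dots\circ\psi_{\varepsilon_N,N}(Q)$ are nested in $N$ and $\mathrm{diam}\,K_N\le\frac{1}{d_1\cdots d_N}+\prod_{j=1}^{N}|\tilde p_{\varepsilon_j,j}|\to0$ by $d_j\ge2$ and $3^{\circ}$. So $\bigcap_N K_N$ is a single point, which by the first step equals $(x;\tilde F(x))$, the point displayed in the statement for the digit string $(\varepsilon_n)$; this gives the inclusion $\supseteq$, once one notes that at a $(d_n)$-rational $x$ the two admissible digit strings land on the same point because $\tilde F(x)$ is a single value (Lemma~3, the vanishing of $\delta$). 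The reverse inclusion $\subseteq$ is immediate: every $x\in[0;1]$ has a representation~\eqref{eq: alternating Cantor series 2} (Lemma~1), hence lies on some branch, and therefore the union on the right is exactly $\{(x;\tilde F(x)):x\in[0;1]\}=\Gamma_{\tilde F}$.

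The routine parts are the two inductions and the diameter estimate. The point that requires care is the index bookkeeping: keeping straight the even--odd reversal $\varepsilon_n\leftrightarrow d_n-1-\varepsilon_n$ as it propagates through the quantities $\omega_{i,n}$, $\tilde\beta_{i,n}$, $\tilde p_{i,n}$ and through the passage from the $\Delta^{-(d_n)}$-coding to the $\Delta^{D}$-coding, so that $\psi_{\varepsilon_n,n}$ genuinely is the $n$-th inverse branch and the composition in the statement is taken with $\psi_{\varepsilon_1,1}$ outermost; and the handling of $(d_n)$-rational points, where one must invoke the consistency of the system of functional equations (Lemma~3, $\delta=0$) to see that the two branches through such a point terminate at the same element of $\Gamma_{\tilde F}$. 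Once this bookkeeping is settled, the remaining argument is the elementary nested-compacta limit above.
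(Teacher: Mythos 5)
Your proposal is correct and follows essentially the same route as the paper: both arguments read the affine maps $\psi_{i_n,n}$ off the system of functional equations (via the identity $\hat{\varphi}^{k-1}(x)=\frac{\omega_{\varepsilon_k,k}+\hat{\varphi}^{k}(x)}{d_k}$ and $f(\hat{\varphi}^{k-1})=\tilde{\beta}_{\varepsilon_k,k}+\tilde{p}_{\varepsilon_k,k}f(\hat{\varphi}^{k})$) and identify the infinite composition with the graph point $(x;\tilde{F}(x))$. Your version merely makes explicit what the paper leaves implicit --- the precise meaning of the infinite composition as a limit of nested compacta, the convergence via condition $3^{\circ}$, and the consistency check at $(d_n)$-rational points --- and you correctly read the composition with $\psi_{\varepsilon_1,1}$ outermost, which is the only interpretation under which the statement holds and is the one the paper intends.
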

\begin{proof}
Оскільки умови
$$
f(x)=\beta_{i,1}+p_{i,1}f(\hat{\varphi}(x)),
$$
$$
f\left(\frac{i+x}{d_1}\right)=\beta_{i,1}+p_{i,1}f(x)
$$
є еквівалентними для $x=\Delta^{D} _{\varepsilon_1[d_2-1-\varepsilon_2]\varepsilon_3[d_4-1-\varepsilon_4]...}$, тому очевидно, що
$$
\psi_{i_1,1}:
\left\{
\begin{aligned}
x^{'}&=\frac{1}{d_1}x+\frac{i_1}{d_1};\\
y^{'}& = {\beta}_{i_1,1}+{p}_{i_1,1}y.\\
\end{aligned}
\right.
$$

Перейдемо до афінних перетворень $\psi_{i,2}$, $i=\overline{0,d_2-1}$. Оскільки рівності
$$
f(\hat{\varphi}(x))=\beta_{d_2-1-i,2}+p_{d_2-1-i,2}f(\hat{\varphi}^2(x)),
$$
$$
f\left(\frac{d_2-1-i+\hat{\varphi}(x)}{d_2}\right)=\beta_{d_2-1-i,2}+p_{d_2-1-i,2}f(\hat{\varphi}(x))
$$
є еквівалентними, тоді
$$
\psi_{i_2,2}:
\left\{
\begin{aligned}
x^{'}&=\frac{1}{d_2}x+\frac{d_2-1-i_2}{d_2};\\
y^{'}& = {\beta}_{d_2-1-i_2,2}+{p}_{d_2-1-i_2,2}y.\\
\end{aligned}
\right.
$$
По індукції отримаємо:
$$
\psi_{i_n,n}:
\left\{
\begin{aligned}
x^{'}&=\frac{1}{d_n}x+\frac{\omega_{i_n,n}}{d_n};\\
y^{'}& = \tilde{\beta}_{i_n,n}+\tilde{p}_{i_n,n}y.\\
\end{aligned}
\right.
$$

Отже, 
$$
\bigcup_{x \in [0;1]}{(x; ... \circ \psi_{\varepsilon_n,n}\circ ...\circ \psi_{\varepsilon_2,2} \circ \psi_{\varepsilon_1,1}(x))}\equiv G \subset  \Gamma_{\tilde{F}}.
$$

Нехай $T(x_0,\tilde F(x_0))\in \Gamma_{\tilde{F}}$. Розглянемо точку $x_n=\hat{\varphi}^n(x_0)$, де $x_0=~\Delta^{D} _{\varepsilon_1[d_2-1-\varepsilon_2]\varepsilon_3[d_4-1-\varepsilon_4]...}$~--- деяка фіксована точка з $[0;1]$.

В силу того, що для будь-якого $n \in \mathbb N$  $\varepsilon_n$ i $d_n-1-\varepsilon_n$ належать множині $A_{d_n}$, 
$$
f\left(\hat{\varphi}^{k}(x_0)\right)=\tilde{\beta}_{\varepsilon_{k+1},k+1}+\tilde{p}_{\varepsilon_{k+1},k+1}f\left(\hat{\varphi}^{k+1}(x_0)\right),~k=0,1,...
$$
та з того, що $\overline{T}\left(\hat{\varphi}^{k}(x_0);\tilde F\left(\hat{\varphi}^{k}(x_0)\right)\right)\in \Gamma_{\tilde{F}}$ випливає 
$$
\psi_{i_k,k}\circ ...\circ \psi_{i_2,2} \circ \psi_{i_1,1}\left(\overline{T}\right)=T_0(x_0;\tilde F(x_0))\in \Gamma_{\tilde{F}}, ~~~i_k \in A_{d_k},~~~k\to \infty.
$$
Звідси й слідує, що $\Gamma_{\tilde{F}}\subset G$. Отже,
$$
\Gamma_{\tilde{F}}=\bigcup_{x \in [0;1]}{(x; ... \circ \psi_{\varepsilon_n,n}\circ ...\circ \psi_{\varepsilon_2,2} \circ \psi_{\varepsilon_1,1}(x))}.
$$
\end{proof}

\section{ Недиференційовні функції} 

Розглянемо випадок, коли елементи матриці $P=~||p_{i,n}||$ можуть бути як невід'ємними, так і від'ємними числами. Тобто, нехай $p_{i,n} \in (-1;1)$ для всіх $n \in \mathbb N$, $i=\overline{0,d_n-1}$.

В такому разі з пункту \ref{1-lemma5} леми \ref{lemma5} слідує, що якщо для кожного $n \in \mathbb N $ числа  $p_{i,n}$, $i=\overline{0,d_n-1}$, є як невід'ємними, так і від'ємними числами, то функція $\tilde F$ не має жодного проміжку монотонності.

\begin{theorem}
Нехай  $p_{\varepsilon_n,n}\cdot p_{\varepsilon_n-1,n}<0$  для всіх $n \in \mathbb N$, $\varepsilon_n \in A_{d_n} \setminus \{0\}$ та
$$
\lim_{n \to \infty} {\prod^{n} _{k=1} {d_k p_{0,k}}}\ne  0, \lim_{n \to \infty} {\prod^{n} _{k=1} {d_k p_{d_k-1,k}}}\ne 0
$$
одночасно. Тоді функція $\tilde{F}$ є ніде недиференційовною на $[0;1]$.
\end{theorem}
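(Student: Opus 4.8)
The plan is to show that $\tilde F$ has no finite derivative at any point of $[0;1]$; so fix an arbitrary $x_0=\Delta^{-(d_n)}_{\varepsilon_1\varepsilon_2\dots}$, assume $\tilde F'(x_0)=D$ exists and is finite, and derive a contradiction (at a $(d_n)$-rational point the argument is to be run for each of its two symbolic representations, which rules out even a one-sided derivative; at $x_0\in\{0,1\}$ only the one available one-sided quotient is used). The first thing to record is that the increment identity of Lemma~\ref{lemma5}, item~\ref{1-lemma5}, namely $\mu_{\tilde F}\bigl(\Delta^{-(d_n)}_{c_1\dots c_m}\bigr)=\prod_{j=1}^{m}\tilde p_{c_j,j}$ (the difference of $\tilde F$ at the right and left ends of the cylinder), holds with \emph{no} sign restriction on the $p_{i,n}$: the computation in that proof is a telescoping use of $1^{\circ}$--$3^{\circ}$ only, and likewise the evaluations $\tilde F(0)=0$, $\tilde F(1)=1$ from the monotonicity theorem are sign-free. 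Consequently, writing the secant of $\tilde F$ over any $[\alpha,\beta]\ni x_0$ as a convex combination of $\frac{\tilde F(\beta)-\tilde F(x_0)}{\beta-x_0}$ and $\frac{\tilde F(x_0)-\tilde F(\alpha)}{x_0-\alpha}$ shows $\frac{\tilde F(\beta)-\tilde F(\alpha)}{\beta-\alpha}\to D$ whenever $\beta-\alpha\to0$; applied to the cylinders $\Delta^{-(d_n)}_{\varepsilon_1\dots\varepsilon_n}$ this gives $q_n:=\prod_{j=1}^{n}d_j\tilde p_{\varepsilon_j,j}\to D$.

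Next I would exploit the sign hypothesis. From $4^{\circ}$ and $\sum_i p_{i,n}=1$ one gets $p_{0,n}>0$, and then $p_{\varepsilon_n,n}p_{\varepsilon_n-1,n}<0$ forces $\operatorname{sign}p_{i,n}=(-1)^{i}$, hence $\operatorname{sign}\tilde p_{i,n}=(-1)^{i}$ for odd $n$ and $(-1)^{d_n-1-i}$ for even $n$. Put $c_n=\varepsilon_n+1$ if $\varepsilon_n<d_n-1$ and $c_n=\varepsilon_n-1$ otherwise; then $\Delta^{-(d_n)}_{\varepsilon_1\dots\varepsilon_{n-1}c_n}$ is geometrically adjacent to $\Delta^{-(d_n)}_{\varepsilon_1\dots\varepsilon_n}\ni x_0$, its normalized increment equals $q_n':=q_{n-1}d_n\tilde p_{c_n,n}$, and $\operatorname{sign}q_n'=-\operatorname{sign}q_n$. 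The union of these two adjacent cylinders contains $x_0$ and shrinks to it, so its secant, which equals $\tfrac12(q_n+q_n')$, tends to $D$; combined with $q_n\to D$ this forces $q_n'\to D$ as well. Two sequences that are termwise of opposite sign and both converge to $D$ can only have $D=0$. Thus differentiability at $x_0$ would force $\tilde F'(x_0)=0$ and $q_n\to0$.

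The heart of the proof, and the step I expect to be the main obstacle, is to contradict $\tilde F'(x_0)=0$ using hypotheses B and C, i.e. that $\prod_{k=1}^{n}d_kp_{0,k}$ and $\prod_{k=1}^{n}d_kp_{d_k-1,k}$ do not tend to $0$. At $x_0=0$ this is immediate: the digits of $0$ are $0,d_2-1,0,d_4-1,\dots$, so $\tilde p_{\varepsilon_j(0),j}=p_{0,j}$ for all $j$, hence $q_n=\prod_{k=1}^{n}d_kp_{0,k}$, which by B does not tend to $0$, contradicting $q_n\to0$; the case $x_0=1$ is symmetric via C. For a general $x_0$ the idea is to transport this through the self-similarity expressed by the functional equations of Section~\ref{3}: inside the rank-$(n-1)$ cylinder $\Delta^{-(d_n)}_{\varepsilon_1\dots\varepsilon_{n-1}}$ containing $x_0$, $\tilde F$ is an affine image, with vertical factor $\prod_{j<n}\tilde p_{\varepsilon_j,j}$, of a shifted copy of itself, in which the two ends obtained by continuing $x_0$'s first $n-1$ digits with the tails $[d_n-1]\,0\,[d_{n+2}-1]\,0\dots$ and $0\,[d_{n+1}-1]\,0\,[d_{n+3}-1]\dots$ play the roles of the values $1$ and $0$. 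The increments of $\tilde F$ from $x_0$ to these two ends of $\Delta^{-(d_n)}_{\varepsilon_1\dots\varepsilon_{n-1}}$ sum (up to sign) to $\prod_{j<n}\tilde p_{\varepsilon_j,j}$, so at every scale at least one of them is $\ge\tfrac12\bigl|\prod_{j<n}\tilde p_{\varepsilon_j,j}\bigr|$ in modulus, with no internal cancellation by $4^{\circ}$. Descending one further level with the digit $0$ (respectively $d_n-1$) at position $n$ and iterating, one produces for infinitely many $n$ intervals shrinking to $x_0$ whose secants are, up to a bounded factor, $q_{n-1}$ times a tail of $\prod_kd_kp_{0,k}$ or of $\prod_kd_kp_{d_k-1,k}$; by B and C those tails do not vanish, and by the previous step $q_{n-1}$ is bounded, so these secants stay bounded away from $0$ — contradicting $\tilde F'(x_0)=0$.

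Hence no finite derivative exists at any $x_0\in[0;1]$, i.e. $\tilde F$ is nowhere differentiable. The points needing the most care are: (a) verifying in the last paragraph that the chosen one-sided interval is genuinely comparable to $|q_{n-1}|$, not merely a bounded multiple of it — this is precisely where B and C are used and $3^{\circ}$ alone does not suffice; (b) the parity bookkeeping, since passing from odd to even $n$ interchanges $p_{i,n}\leftrightarrow p_{d_n-1-i,n}$ and reverses the geometric order of the rank-$n$ sub-cylinders; and (c) running the whole scheme for both symbolic representations at a $(d_n)$-rational $x_0$, so that not even a one-sided derivative survives.
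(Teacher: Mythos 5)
Your first step (forcing $D=0$) is correct and is in fact an observation the paper does not make: the normalized increments $q_n=\prod_{j\le n}d_j\tilde p_{\varepsilon_j,j}$ over the rank-$n$ cylinder containing $x_0$ and $q_n'$ over its neighbour with digit $\varepsilon_n\pm1$ have opposite signs, yet both must converge to the putative derivative $D$, so $D=0$. The genuine gap is in what you yourself call the heart of the proof, the contradiction with $\tilde F'(x_0)=0$ at a \emph{general} point. By your own Part 1, $q_{n-1}\to0$, so the bound ``at least one one-sided increment is $\ge\tfrac12|q_{n-1}|/(d_1\cdots d_{n-1})$'' only yields difference quotients $\ge\tfrac12|q_{n-1}|\to0$ --- no contradiction. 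The concluding inference ``the tails of $\prod_k d_kp_{0,k}$ do not vanish and $q_{n-1}$ is bounded, hence the secants $q_{n-1}\cdot(\text{tail})$ stay bounded away from $0$'' is a non sequitur: bounded times non-vanishing is not bounded away from zero, and here the bounded factor tends to $0$. Worse, for an $x_0$ that is not quasi-$(d_n)$-rational the increment of $\tilde F$ from $x_0$ to an endpoint of the rank-$(n-1)$ cylinder is a series over \emph{all} remaining digits of $x_0$ and does not factor as $q_{n-1}$ times a tail of $\prod_k d_kp_{0,k}$ or of $\prod_k d_kp_{d_k-1,k}$; those two products control only the extremal digit tails $[d_{n+1}-1]\,0\,[d_{n+3}-1]\dots$ and $0\,[d_{n+2}-1]\,0\dots$, i.e.\ exactly the tails that occur at quasi-rational points. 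So hypotheses B and C are never actually brought to bear on a generic $x_0$.

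What makes the argument close at the points the paper treats is that there the index $n$ is \emph{fixed}. The paper takes a quasi-$(d_n)$-rational $x_0$ whose two representations split at position $n$, the explicit sequences $x'_k=x_0+\frac1{d_1\cdots d_{n+k}}$ and $x''_k=x_0-\frac1{d_1\cdots d_{n+k}}$, and computes the one-sided quotients exactly: $B'_k=C_1\prod_{m=n+1}^{n+k}d_mp_{0,m}$ and $B''_k=C_2\prod_{m=n+1}^{n+k}d_mp_{d_m-1,m}$ with nonzero constants $C_1,C_2$ of opposite sign (this is where $p_{\varepsilon_n,n}p_{\varepsilon_n-1,n}<0$ enters); conditions B and C then forbid the two quotients from having a common finite limit. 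If you restrict your scheme to these points --- where $q_{n-1}$ is a fixed nonzero constant rather than a vanishing sequence --- it collapses into the paper's proof. For the remaining, non-quasi-rational points neither your sketch nor the paper's written argument says anything, so you would need a genuinely new idea to control the increments of $\tilde F$ from such an $x_0$ to nearby cylinder endpoints before claiming nowhere differentiability on all of $[0;1]$.
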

\begin{proof}
Виберемо деяку нега-$(d_n)$-раціональну точку  $x_0$: 
$$
x_0=\Delta^{-(d_n)} _{\varepsilon_1\varepsilon_2...\varepsilon_{n-1}\varepsilon_n[d_{n+1}-1]0[d_{n+3}-1]...}=\Delta^{-(d_n)} _{\varepsilon_1\varepsilon_2...\varepsilon_{n-1}[\varepsilon_n-1]0[d_{n+2}-1]0[d_{n+4}-1]...}, 
$$
де $\varepsilon_n \ne 0$.

Введемо деякі позначення. Нехай $n$ --- непарне. Тоді
$$
x_0=x^{(1)}_0=\Delta^{-(d_n)} _{\varepsilon_1\varepsilon_2...\varepsilon_{n-1}\varepsilon_n[d_{n+1}-1]0[d_{n+3}-1]...}=
\Delta^{-(d_n)} _{\varepsilon_1\varepsilon_2...\varepsilon_{n-1}[\varepsilon_n-1]0[d_{n+2}-1]0[d_{n+4}-1]...}=x^{(2)}_0
$$
та у  випадку парного  $n$
$$
x_0=x^{(1)}_0=\Delta^{-(d_n)} _{\varepsilon_1\varepsilon_2...\varepsilon_{n-1}[\varepsilon_n-1]0[d_{n+2}-1]0[d_{n+4}-1]...}=
\Delta^{-(d_n)} _{\varepsilon_1\varepsilon_2...\varepsilon_{n-1}\varepsilon_n[d_{n+1}-1]0[d_{n+3}-1]...}=x^{(2)}_0.
$$

Розглянемо числові послідовності $(x^{'} _k)$, $(x^{''} _k)$:  
$$
x^{'} _k=\begin{cases}
\Delta^{-(d_n)} _{\varepsilon_1...\varepsilon_{n-1}\varepsilon_n[d_{n+1}-1]0[d_{n+3}-1]0...[d_{n+k-1}-1]1[d_{n+k+1}-1]0[d_{n+k+3}-1]...},&\text{$n$ --- непарне, $k$ --- парне;}\\
\Delta^{-(d_n)} _{\varepsilon_1...\varepsilon_{n-1}\varepsilon_n[d_{n+1}-1]0...[d_{n+k-2}-1]0[d_{n+k}-2]0[d_{n+k+2}-1]0[d_{n+k+4}-1]...},&\text{$n$ --- непарне, $k$ --- непарне;}\\
\Delta^{-(d_n)} _{\varepsilon_1...\varepsilon_{n-1}[\varepsilon_n-1]0[d_{n+2}-1]0[d_{n+4}-1]0...[d_{n+k-1}-1]1[d_{n+k+1}-1]0[d_{n+k+3}-1]...},&\text{  $n$ --- парне, $k$ --- непарне;}\\
\Delta^{-(d_n)} _{\varepsilon_1...\varepsilon_{n-1}[\varepsilon_n-1]0[d_{n+2}-1]0...[d_{n+k-2}-1]0[d_{n+k}-2]0[d_{n+k+2}-1]0[d_{n+k+4}-1]...},&\text{$n$ --- парне, $k$ --- парне,}
\end{cases}
$$
$$
x^{''} _k=\begin{cases}
\Delta^{-(d_n)} _{\varepsilon_1...\varepsilon_{n-1}[\varepsilon_n-1]0[d_{n+2}-1]0...[d_{n+k-1}-1]00[d_{n+k+2}-1]0[d_{n+k+4}-1]...},&\text{$n$ --- непарне, $k$ --- непарне;}\\
\Delta^{-(d_n)} _{\varepsilon_1...\varepsilon_{n-1}[\varepsilon_n-1]0[d_{n+2}-1]0...[d_{n+k}-1][d_{n+k+1}-1]0[d_{n+k+3}-1]0[d_{n+k+5}-1]...},&\text{$n$ --- непарне, $k$ --- парне;}\\
\Delta^{-(d_n)} _{\varepsilon_1...\varepsilon_{n-1}\varepsilon_n[d_{n+1}-1]0[d_{n+3}-1]...0[d_{n+k}-1][d_{n+k+1}-1]0[d_{n+k+3}-1]...},&\text{$n$ --- парне, $k$ --- непарне;}\\
\Delta^{-(d_n)} _{\varepsilon_1...\varepsilon_{n-1}\varepsilon_n[d_{n+1}-1]0...[d_{n+k-1}-1]00[d_{n+k+2}-1]0[d_{n+k+4}-1]0[d_{n+k+6}-1]...},&\text{$n$ --- парне, $k$ --- парне,}
\end{cases}
$$
Тобто,
$$
x^{'} _k=x^{(1)} _0+\frac{1}{d_1d_2...d_{n+k}},
$$
$$
x^{''} _k=x^{(2)} _0-\frac{1}{d_1d_2...d_{n+k}}
$$
та $x^{'} _k \to x_0$, $x^{''} _k \to x_0$ при $k \to \infty$.

Нехай  $n$ --- непарне. Тоді
$$
y^{(1)} _{0}=g(x^{(1)} _0)=\Delta^{D} _{\varepsilon_1[d_2-1-\varepsilon_2]\varepsilon_3[d_4-1-\varepsilon_4]\varepsilon_5...[d_{n-1}-1-\varepsilon_{n-1}]\varepsilon_n(0)},
$$
$$
y^{(2)} _{0}=g(x^{(2)} _0)=\Delta^{D} _{\varepsilon_1[d_2-1-\varepsilon_2]\varepsilon_3[d_4-1-\varepsilon_4]...[d_{n-1}-1-\varepsilon_{n-1}][\varepsilon_n-1][d_{n+1}-1][d_{n+2}-1][d_{n+3}-1]...},
$$
$$
y^{'} _k=g(x^{'} _k)=\Delta^D _{\varepsilon_1[d_2-1-\varepsilon_2]\varepsilon_3[d_4-1-\varepsilon_4]...[d_{n-1}-1-\varepsilon_{n-1}]\varepsilon_n\underbrace{0...0}_{k-1}1(0)},
$$
$$
y^{''} _k=g(x^{''} _k)=\Delta^D _{\varepsilon_1[d_2-1-\varepsilon_2]\varepsilon_3[d_4-1-\varepsilon_4]...[d_{n-1}-1-\varepsilon_{n-1}][\varepsilon_n-1][d_{n+1}-1][d_{n+2}-1]...[d_{n+k}-1](0)},
$$
де (як згадувалося вище) $\tilde{F}(x)=F(g(x))=F\circ g$.

Тоді
$$
\tilde F(x^{'} _k)=F(y^{'} _k)=\beta_{\varepsilon_1,1}+\sum^{n-1} _{t=2} {\left(\tilde \beta_{\varepsilon_t,t}\prod^{t-1} _{j=1} {\tilde p_{\varepsilon_j,j}}\right)}+\beta_{\varepsilon_n,n}\prod^{n-1} _{j=1} {\tilde p_{\varepsilon_j,j}}+
$$
$$
+\left(\sum^{n+k-1} _{l=n+1} {\left(\beta_{0,l}\prod^{l-1} _{m=n+1} {p_{0,m}}\right)}\right)\cdot\left(\prod^{n} _{j=1} {\tilde p_{\varepsilon_j,j}}\right)+\beta_{1,n+k}\left(\prod^{n} _{j=1} {\tilde p_{\varepsilon_j,j}}\right)\cdot \left(\prod^{n+k-1} _{m=n+1} {p_{0,m}}\right),
$$
$$
\tilde F(x^{(1)} _0)=F(y^{(1)} _0)=\beta_{\varepsilon_1,1}+\sum^{n-1} _{t=2} {\left(\tilde \beta_{\varepsilon_t,t}\prod^{t-1} _{j=1} {\tilde p_{\varepsilon_j,j}}\right)}+\beta_{\varepsilon_n,n}\prod^{n-1} _{j=1} {\tilde p_{\varepsilon_j,j}}.
$$

Отже,
$$
\tilde F(x^{'} _k)-\tilde F(x^{(1)} _0)=\beta_{1,n+k}\left(\prod^{n} _{j=1} {\tilde p_{\varepsilon_j,j}}\right)\cdot \left(\prod^{n+k-1} _{m=n+1} {p_{0,m}}\right)=\left(\prod^{n} _{j=1} {\tilde p_{\varepsilon_j,j}}\right)\cdot \left(\prod^{n+k} _{m=n+1} {p_{0,m}}\right).
$$

В свою чергу
$$
\tilde F(x^{(2)} _0)=F(y^{(2)} _0)=\beta_{\varepsilon_1,1}+\sum^{n-1} _{t=2} {\left(\tilde \beta_{\varepsilon_t,t}\prod^{t-1} _{j=1} {\tilde p_{\varepsilon_j,j}}\right)}+\beta_{\varepsilon_n-1,n}\prod^{n-1} _{j=1} {\tilde p_{\varepsilon_j,j}}+ 
$$
$$
+p_{\varepsilon_n-1,n}\left(\prod^{n-1} _{j=1} {\tilde p_{\varepsilon_j,j}}\right)\left(\sum^{\infty} _{l=n+1}{\left[\beta_{d_l-1,l}\prod^{l-1} _{m=n+1} {p_{d_m-1,m}}\right]}\right),
$$
$$
\tilde F(x^{''} _k)=F(y^{''} _k)=\beta_{\varepsilon_1,1}+\sum^{n-1} _{t=2} {\left(\tilde \beta_{\varepsilon_t,t}\prod^{t-1} _{j=1} {\tilde p_{\varepsilon_j,j}}\right)}+\beta_{\varepsilon_n-1,n}\prod^{n-1} _{j=1} {\tilde p_{\varepsilon_j,j}}+ 
$$
$$
+p_{\varepsilon_n-1,n}\left(\prod^{n-1} _{j=1} {\tilde p_{\varepsilon_j,j}}\right)\left(\sum^{n+k} _{l=n+1}{\left[\beta_{d_l-1,l}\prod^{l-1} _{m=n+1} {p_{d_m-1,m}}\right]}\right).
$$
Звідки
$$
\tilde F(x^{(2)} _0)-\tilde F(x^{''} _k)=p_{\varepsilon_n-1,n}\left(\prod^{n-1} _{j=1} {\tilde p_{\varepsilon_j,j}}\right)\cdot \left(\prod^{n+k} _{m=n+1} {p_{d_m-1,m}}\right).
$$

Перейдемо до розгляду випадку, коли  $n$ --- парне. В такому разі
$$
y^{(1)} _{0}=g(x^{(1)} _0)=\Delta^{D} _{\varepsilon_1[d_2-1-\varepsilon_2]\varepsilon_3[d_4-1-\varepsilon_4]...\varepsilon_{n-1}[d_{n}-\varepsilon_{n}](0)},
$$
$$
y^{(2)} _{0}=g(x^{(2)} _0)=\Delta^{D} _{\varepsilon_1[d_2-1-\varepsilon_2]\varepsilon_3[d_4-1-\varepsilon_4]...\varepsilon_{n-1}[d_n-\varepsilon_n-1][d_{n+1}-1][d_{n+2}-1]...},
$$
$$
y^{'} _k=g(x^{'} _k)=\Delta^D _{\varepsilon_1[d_2-1-\varepsilon_2]\varepsilon_3[d_4-1-\varepsilon_4]...\varepsilon_{n-1}[d_n-\varepsilon_n]\underbrace{0...0}_{k-1}1(0)},
$$
$$
y^{''} _k=g(x^{''} _k)=\Delta^D _{\varepsilon_1[d_2-1-\varepsilon_2]\varepsilon_3[d_4-1-\varepsilon_4]...\varepsilon_{n-1}[d_n-1-\varepsilon_n][d_{n+1}-1][d_{n+2}-1]...[d_{n+k}-1](0)},
$$

Як наслідок,
$$
\tilde F(x^{'} _k)=F(y^{'} _k)=\beta_{\varepsilon_1,1}+\sum^{n-1} _{t=2} {\left(\tilde \beta_{\varepsilon_t,t}\prod^{t-1} _{j=1} {\tilde p_{\varepsilon_j,j}}\right)}+\beta_{d_n-\varepsilon_n,n}\prod^{n-1} _{j=1} {\tilde p_{\varepsilon_j,j}}+
$$
$$
+\beta_{1,n+k}\left(\prod^{n-1} _{j=1} {\tilde p_{\varepsilon_j,j}}\right)\cdot \left(\prod^{n+k-1} _{m=n+1} {p_{0,m}}\right)p_{d_n-\varepsilon_n,n},
$$
$$
\tilde F(x^{(1)} _0)=F(y^{(1)} _0)=\beta_{\varepsilon_1,1}+\sum^{n-1} _{t=2} {\left(\tilde \beta_{\varepsilon_t,t}\prod^{t-1} _{j=1} {\tilde p_{\varepsilon_j,j}}\right)}+\beta_{d_n-\varepsilon_n,n}\prod^{n-1} _{j=1} {\tilde p_{\varepsilon_j,j}}.
$$

Отже,
$$
\tilde F(x^{'} _k)-\tilde F(x^{(1)} _0)=\beta_{1,n+k}\left(\prod^{n-1} _{j=1} {\tilde p_{\varepsilon_j,j}}\right)\cdot \left(\prod^{n+k-1} _{m=n+1} {p_{0,m}}\right)p_{d_n-\varepsilon_n,n}=
$$
$$
=\left(\prod^{n-1} _{j=1} {\tilde p_{\varepsilon_j,j}}\right)\cdot \left(\prod^{n+k} _{m=n+1} {p_{0,m}}\right)p_{d_n-\varepsilon_n,n}.
$$

В свою чергу
$$
\tilde F(x^{(2)} _0)=F(y^{(2)} _0)=\beta_{\varepsilon_1,1}+\sum^{n-1} _{t=2} {\left(\tilde \beta_{\varepsilon_t,t}\prod^{t-1} _{j=1} {\tilde p_{\varepsilon_j,j}}\right)}+\beta_{d_n-1-\varepsilon_n,n}\prod^{n-1} _{j=1} {\tilde p_{\varepsilon_j,j}}+ 
$$
$$
+\left(\prod^{n} _{j=1} {\tilde p_{\varepsilon_j,j}}\right)\left(\sum^{\infty} _{l=n+1}{\left[\beta_{d_l-1,l}\prod^{l-1} _{m=n+1} {p_{d_m-1,m}}\right]}\right),
$$
$$
\tilde F(x^{''} _k)=F(y^{''} _k)=\beta_{\varepsilon_1,1}+\sum^{n-1} _{t=2} {\left(\tilde \beta_{\varepsilon_t,t}\prod^{t-1} _{j=1} {\tilde p_{\varepsilon_j,j}}\right)}+\beta_{d_n-1-\varepsilon_n,n}\prod^{n-1} _{j=1} {\tilde p_{\varepsilon_j,j}}+ 
$$
$$
+\left(\prod^{n} _{j=1} {\tilde p_{\varepsilon_j,j}}\right)\left(\sum^{n+k} _{l=n+1}{\left[\beta_{d_l-1,l}\prod^{l-1} _{m=n+1} {p_{d_m-1,m}}\right]}\right).
$$
Звідки
$$
\tilde F(x^{(2)} _0)-\tilde F(x^{''} _k)=p_{d_n-1-\varepsilon_n,n}\left(\prod^{n-1} _{j=1} {\tilde p_{\varepsilon_j,j}}\right)\cdot \left(\prod^{n+k} _{m=n+1} {p_{d_m-1,m}}\right).
$$

Таким чином, 

$$
B^{'} _k=\frac{\tilde F(x^{'} _k)-\tilde F(x_0)}{x^{'} _k-x_0}=\begin{cases}
(d_n p_{\varepsilon_n,n})\left(\prod^{n-1} _{j=1} {d_j\tilde p_{\varepsilon_j,j}}\right) \left(\prod^{n+k} _{m=n+1} {d_mp_{0,m}}\right),\text{ $n$ --- непарне;}\\
(d_n p_{d_n-\varepsilon_n,n})\left(\prod^{n-1} _{j=1} {d_j\tilde p_{\varepsilon_j,j}}\right) \left(\prod^{n+k} _{m=n+1} {d_mp_{0,m}}\right),\text{ $n$ --- парне.}
\end{cases}
$$
$$
B^{''} _k=\frac{\tilde F(x_0)-\tilde F(x^{''} _k)}{x_0-x^{''} _k}=\begin{cases}
(d_n p_{\varepsilon_n-1,n})\left(\prod^{n-1} _{j=1} {d_j\tilde p_{\varepsilon_j,j}}\right) \left(\prod^{n+k} _{m=n+1} {d_m p_{d_m-1,m}}\right),\text{ $n$ --- непарне;}\\
(d_n p_{d_n-1-\varepsilon_n,n})\left(\prod^{n-1} _{j=1} {d_j\tilde p_{\varepsilon_j,j}}\right) \left(\prod^{n+k} _{m=n+1} {d_m p_{d_m-1,m}}\right),\text{$n$ --- парне.}
\end{cases}
$$

 Позначимо $b_{0,k}=\prod^{n+k} _{m=n+1} {d_mp_{0,m}}$ і  $b_{d_k-1,k}=~\prod^{n+k} _{m=n+1} {d_m p_{d_m-1,m}}$.

Оскільки, $\prod^{n-1} _{j=1} {d_j\tilde p_{\varepsilon_j,j}}=const$, $p_{\varepsilon_n,n}p_{\varepsilon_n-1,n}<0$,  $p_{d_n-\varepsilon_n,n}p_{d_n-1-\varepsilon_n,n}<0$ та за умовою теореми  послідовності $(b_{0,k})$,  $(b_{d_k-1,k})$  не збігаються до $0$ одночасно, отримаємо наступні випадки: 
\begin{enumerate}
\item якщо для всіх $k\in \mathbb N$ за винятком, можливо, скінченної множини номерів $k$: $d_kp_{0,k}>1$ та  $d_kp_{d_k-1,k}>1$, то одна з послідовностей  $B^{'} _k$, $B^{''} _k$ прямує до $\infty$, а інша --- до $- \infty$;

\item якщо для всіх $k\in \mathbb N$ за винятком, можливо, скінченної множини номерів $k$: один з добутків $d_kp_{0,k}$, $d_kp_{d_k-1,k}$ є більшим $1$, а інший --- меншим $1$, тоді одна з послідовностей  $B^{'} _k$, $B^{''} _k$ прямує до $\pm \infty$, а інша --- до $0$;

\item якщо для всіх $k\in \mathbb N$ за винятком, можливо, скінченної множини номерів $k$: один з добутків $d_kp_{0,k}$, $d_kp_{d_k-1,k}$ є більшим $1$, а інший --- рівним $1$, тоді одна з послідовностей  $B^{'} _k$, $B^{''} _k$ прямує до $\pm \infty$, а інша є сталою послідовністю;

\item якщо для всіх $k\in \mathbb N$ за винятком, можливо, скінченної множини номерів $k$: один з добутків $d_kp_{0,k}$, $d_kp_{d_k-1,k}$ є меншим $1$, а інший --- рівним $1$, тоді одна з послідовностей  $B^{'} _k$, $B^{''} _k$ прямує до $0$, а інша є сталою послідовністю;

\item якщо для всіх $k\in \mathbb N$  добутки $d_kp_{0,k}$, $d_kp_{d_k-1,k}$ є рівними $1$, то послідовності $B^{'} _k$, $B^{''} _k$ є різними сталими послідовностями,  оскільки $p_{\varepsilon_n,n} \ne p_{\varepsilon_n-1,n}$,  $p_{d_n-\varepsilon_n,n} \ne~p_{d_n-1-\varepsilon_n,n}$  в силу умов $p_{\varepsilon_k,k} \in (-1;1)$ та $\beta_{\varepsilon_k,k}>0$ для $\varepsilon_k>0$.
\end{enumerate}

Таким чином, функція $\tilde F$ є ніде не диференційовною на $[0;1]$, оскільки  у всіх випадках $\lim_{k \to \infty}{B^{'} _k} \ne \lim_{k \to \infty}{B^{''} _k}$.
\end{proof}


\begin{thebibliography}{9}

\bibitem{BPP09} \emph{Барановський~О.~М., Працьовита~І.~М., Працьовитий~М.~В.} Про одну функцію, пов'язану з рядами Остроградського 1-го та 2-го видів
  //~Науковий часопис НПУ імені М. П. Драгоманова. Серія 1. Фізико-математичні науки. "--- Київ: НПУ імені М. П. Драгоманова."--- 2009, № 10." --- С.~40~---~49.

\bibitem{Pra98} \emph{Працьовитий~М.~В.} Фрактальний підхід у дослідженнях сингулярних розподілів. "--- Київ: Вид-во НПУ імені М. П. Драгоманова,
  1998. "--- 296~с.
  
  \bibitem{PK2011} \emph{Працьовитий~М.~В., Калашніков~А.~В.} Про один клас неперервних функцій зі складною локальною будовою, більшість з яких сингулярні або недиференційовні
  //~Труды Ин-та прикл. математики и механики НАН Украины. ---  2011. --- № 23. --- С.~178 ---189.

\bibitem{Prats02} {\it Працьовитий М. В.}
Фрактальні властивості однієї неперервної ніде не диференційовної функції~// Наукові записки НПУ імені М.  П. Драгоманова. Фізико-математичні науки. --- Київ: НПУ імені М. П. Драгоманова. --- 2002, №3. --- С. 351-362.

\bibitem{Prats89} {\it Працевитый Н. В.} Непрерывные канторовские проекторы// Методы исследования алгебраических и топологических структур. --- К.: КГПИ. --- 1989.  --- С. 95-105.

\bibitem{Symon4} {\it Сербенюк С. О.}
Про деякi множини дiйсних чисел, визначенi в термiнах нега-s-кового та
канторiвського нега-s-кового зображень// Науковий часопис НПУ імені М. П. Драгоманова. Серія 1. Фізико-математичні науки.~--- Київ: НПУ імені М. П. Драгоманова. --- 2013, №15. --- С.~168-187. 
  
  \bibitem{Symon3} {\it Сербенюк С. О.}
Зображення чисел знакододатними рядами Кантора: задання рацiональних чисел// Науковий часопис НПУ імені М. П. Драгоманова. Серія 1. Фізико-математичні науки.~--- Київ: НПУ імені М. П. Драгоманова. --- 2013, №14. --- С.~253-267.

\bibitem{Symon12(2)} {\it Сербенюк С. О.}
Про одну майже скрiзь неперервну i нiде не диференцiйовну функцiю, яка задана автоматом зi скiнченною пам’яттю // Науковий часопис НПУ імені М. П. Драгоманова. Серія~1. Фізико-математичні науки. --- Київ: НПУ імені М. П. Драгоманова. --- 2012, №13(2). --- С. 166-182.

 \bibitem{Symon2015} {\it Сербенюк С. О.} Функції, означені системами функціональних рівнянь у термінах зображення чисел рядами Кантора // Науковi записки НаУКМА. --- 2015. --- Т. 165: Фізико-математичні науки.~--- С. 34-40.

\bibitem{Pra92} \emph{Турбин А. Ф., Працевитый~Н.~В., } Фрактальные множества, функции, распределения. "--- Киев: Наукова думка, 1992. "---  208~с.
 
\bibitem{Cantor1} \emph{Cantor~G.} Ueber die einfachen Zahlensysteme
  //~Z. Mathl. Phys. "--- 1869."--- Bd. 14."--- S.~121--128.
  


\end{thebibliography}
\end{document}